\DeclareMathOperator{\lcm}{lcm}
\begin{document}

\title[Formal factorization of irregular operators]{Formal factorization of higher order irregular linear differential operators}

\author[L. Mezuman]{Leanne Mezuman}
\author[S. Yakovenko]{Sergei Yakovenko$^\dag$}
\address{
Department of Mathematics\\
Weizmann Institute of Science,
Israel}
\email{{\{leanne.mezuman, sergei.yakovenko\}@weizmann.ac.il}}
\thanks{$^\dag$Gershon Kekst Professor of Mathematics}

\begin{abstract}
We study the problem of decomposition (non-commutative factorization) of linear ordinary differential operators near an irregular singular point. The solution (given in terms of the Newton diagram and the respective characteristic numbers) is known for quite some time, though the proofs are rather involved. 

We suggest a process of reduction of the non-commutative problem to its commutative analog, the problem of factorization of pseudopolynomials, which is known since Newton invented his method of rotating ruler. It turns out that there is an ``automatic translation'' which allows to obtain the results for formal factorization in the Weyl algebra from well known results in local analytic geometry.

Besides, we draw some (apparently unnoticed) parallelism between the formal factorization of linear operators and formal diagonalization of systems of linear first order differential equations.

\end{abstract}

\date{May 10, 2018}
\let\parasymbol=\S
\def\secref#1{\parasymbol\ref{#1}}
\def\C{{\mathbb C}}
\def\Cs{{\mathscr C}}
\def\N{{\mathbb N}}
\def\Q{{\mathbb Q}}
\edef\Ore{\O}
\def\O{{\mathscr O}}
\def\M{{\mathscr M}}
\def\W{{\hat{\mathscr W}}}
\def\G{{\varGamma}}
\def\R{{\mathbb R}}
\def\F{{\mathscr F}}
\def\Z{{\mathbb Z}}
\def\P{{\mathbb P}}
\def\E{{\scriptstyle{\mathscr E}}}
\def\H{\boldsymbol H}
\def\Ker{\operatorname{Ker}}
\def\:{\colon}
\def\eu{{\epsilon}}
\def\f{\varphi}
\def\l{\lambda}
\def\L{\varLambda}
\def\ord{\operatorname{ord}}
\def\Mat{\operatorname{Mat}}
\def\^{\hat}
\def\~{\widetilde}
\def\ssm{\smallsetminus}
\def\le{\leqslant}
\def\ge{\geqslant}
\def\GL{{\operatorname{GL}}}
\def\id{\operatorname{id}}
\edef\iorig{\i}
\def\i{\mathrm i}
\def\d{\partial}
\def\<{\left<}
\def\>{\right>}
\def\S{\varSigma}
\def\lcm{\operatorname{lcm}}
\def\sh#1{^{\scriptscriptstyle[#1]}}
\def\supp{\operatorname{supp}}
\def\D{\Delta}
\def\Sym{\operatorname{Sym}}
\def\Dif{\operatorname{Dif}}
\def\Lead{\operatorname{Lead}}
\def\s{\operatorname{\boldsymbol\varsigma}}
\def\e{\varepsilon}

\let\chiorig=\chi
\def\chi{\raise 1.5pt\hbox{$\chiorig$}}
\def\wgt{\operatorname{wgt}}
\def\PS{\operatorname{\mathscr S}}
\def\PP{{\mathscr P}}
\def\U{{\mathscr U}}

\newtheorem{Thm}{Theorem}
\newtheorem{Lem}{Lemma}
\newtheorem{Prop}{Proposition}
\newtheorem{Cor}{Corollary}

\theoremstyle{definition}
\newtheorem{Def}{Definition}
\newtheorem{Prob}{\textcolor{red}{Problem}}
\newtheorem{Ex}{Example}

\theoremstyle{remark}
\newtheorem{Rem}{Remark}

\maketitle

\section{Introduction}

The local theory of linear ordinary differential equations exists in two closely related but different flavors. First, one can consider systems of first order linear equations near a singular point. Such systems form an infinite-dimensional space on which several groups of gauge transformations act naturally. Then the classification problem arises: what is the simplest normal form to which a given system can be reduced by a gauge transformations. This theory is well developed, in particular, delicate results explaining the difference between formal and convergent classification (the Stokes phenomenon) were obtained half a century ago.

Another flavor of the theory deals with (scalar) higher order linear differential equations involving only one unknown function. Formally such equations can be reduced to systems of first order equations and vice versa, but the natural group action is lost by such reduction. Instead a notion of Weyl equivalence can be introduced, which makes the classification problem meaningful once again.

The two theories are closely parallel (but clearly different) for the mildest type of singularities, the Fuchsian (regular) ones, as was shown in \cite{shira}. In this paper we discuss the theorem by Bernard Malgrange  \cite{malgrange} which is an analogue of a theorem of formal diagonalization of non-resonant irregular singularities \cite{thebook}*{Theorem 20.7}.

We start with a brief summary of the theory of systems of first order equations. For simplicity from the very beginning we concentrate on the formal case, leaving the issue of convergence for remarks.

\subsection{Systems of first order linear ordinary differential equations}
Denote by $\C[[t]]$ the differential ring of formal Taylor series and $\Bbbk=\C[t^{-1}][[t]]$ its quotient differential field of Laurent polynomials with the usual derivation $\d=\frac{\mathrm d}{\mathrm dt}$.  A \emph{system of first order linear ordinary differential equations} over $\Bbbk$ is defined by an $n\times n$ matrix $M=\{M_{ij}\}\in\Mat(n,\Bbbk)$ and has the form
\begin{equation*}
  \tfrac{\mathrm d}{\mathrm dt} x_i=\sum _{j=1}^n M_{ij}(t)x_j,\qquad i=1\dots,k.
\end{equation*}
It is more convenient to write this equation in the matrix form with respect to the unknown $n\times n$-matrix function $X$, specifically singling out the order of the pole of the coefficients matrix as follows,
\begin{equation}\label{ls-2}
  t^{1+r}\,\tfrac{\mathrm d}{\mathrm dt} X=A(t)X,\quad r\in\Z_+,\  A=A_0+A_1t+A_2t^2+\cdots\in\Mat(n,\C[[t]]).
\end{equation}
The integer $r\ge 0$ is called the \emph{Poincar\'e index} of the system \eqref{ls-2}; if $r=0$, the system is called \emph{Fuchsian}, the leading matrix $A_0$ of the matrix formal Taylor series $A(t)$ is assumed nonzero. 

The group of \emph{formal gauge transformations} $\GL(n,\C[[t]])$ acts naturally on linear systems of the form \eqref{ls-2} by ``change of variables'': if $H(t)=H_0+H_1t+H_2t^2+\cdots$, $\det H_0\ne 0$ is a formal matrix series, then the transformed system for the new ``unknown'' matrix $Y=H(t)X$ takes the form $t^{1+r}\,\tfrac{\mathrm d}{\mathrm dt} Y=B(t)Y$ with the new matrix coefficient $B(t)=t^{r+1}(\tfrac{\mathrm d}{\mathrm dt}H)H^{-1}-HA(t)H^{-1}$.

The natural question is to describe the orbits of this action, in particular, determine what is the ``simplest'' form to which a given system can be reduced by a suitable formal gauge transformation. This question is almost completely settled for Fuchsian systems, including the issue of convergence for holomorphic systems and holomorphic gauge transformations. The question for non-Fuchsian systems is much more subtle, especially the issue of convergence, yet the first step of the formal classification is rather simple.

\begin{Def}
An non-Fuchsian system \eqref{ls-2} is resonant, if among the eigenvalues $\l_1,\dots,\l_n$ of the leading matrix $A_0$ occur equal numbers with zero differnce. Otherwise (when all eigenvalues are pairwise different) the system is non-resonant, see \cite{thebook}*{\parasymbol 20C}.
\end{Def}

\begin{Rem}
For Fuchsian systems the resonance condition means that some of the eigenvalues differ by a natural number, i.e., $\l_i-\l_j\in\N$ for some $i\ne j$.
\end{Rem}

\begin{Thm}\label{thm:diag}
A non-resonant non-Fuchsian system can be formally diagonalized, i.e., there exists a formal gauge transformation such that the corresponding transform $B(t)$ becomes a diagonal matrix.
\end{Thm}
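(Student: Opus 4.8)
The plan is to construct the diagonalizing gauge transformation $H(t)$ order by order as a formal power series, using the non-resonance hypothesis to solve the homological equation at each step. First I would put the system in a convenient form: since the eigenvalues $\lambda_1,\dots,\lambda_n$ of $A_0$ are pairwise distinct, a constant gauge transformation $H_0\in\GL(n,\C)$ brings $A_0$ to diagonal form $\Lambda=\operatorname{diag}(\lambda_1,\dots,\lambda_n)$, so without loss of generality $A(t)=\Lambda+A_1t+A_2t^2+\cdots$ with $r\ge 1$. I would then look for $H(t)=\id+H_1t+H_2t^2+\cdots$ (formal, with $\det H_0=1$) such that the transformed coefficient $B(t)=t^{r+1}(\tfrac{\mathrm d}{\mathrm dt}H)H^{-1}-HA(t)H^{-1}$ — or rather, after rearranging, $HA = BH - t^{r+1}\tfrac{\mathrm d}{\mathrm dt}H$ — is diagonal.

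The core computation is to expand this identity in powers of $t$ and extract the coefficient of $t^k$. Because of the factor $t^{r+1}$ with $r\ge 1$, the derivative term $t^{r+1}\tfrac{\mathrm d}{\mathrm dt}H$ contributes only to coefficients of $t^k$ with $k\ge r+1$, and at order $t^k$ it involves $H_{k-r}$, which has already been determined. Thus at each order $k\ge 1$ the unknowns are $H_k$ (off-diagonal part) and $B_k$ (diagonal part), and the equation takes the schematic form
\begin{equation*}
  \Lambda H_k - H_k \Lambda = B_k - (\text{known terms built from }A_0,\dots,A_k,\ H_1,\dots,H_{k-1}).
\end{equation*}
The operator $\operatorname{ad}_\Lambda\colon X\mapsto \Lambda X-X\Lambda$ acts on the matrix entry $X_{ij}$ by multiplication by $\lambda_i-\lambda_j$; it is invertible on off-diagonal matrices precisely because of non-resonance ($\lambda_i\ne\lambda_j$ for $i\ne j$), and it annihilates diagonal matrices. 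Hence one sets $B_k$ equal to the diagonal part of the known right-hand side (forced, since $\operatorname{ad}_\Lambda H_k$ has zero diagonal) and then solves uniquely for the off-diagonal entries of $H_k$ by dividing by $\lambda_i-\lambda_j$. The diagonal part of $H_k$ may be taken to be zero. This recursion determines all $H_k$ and $B_k$, producing a formal gauge transformation after which $B(t)$ is diagonal.

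The main obstacle — or rather the only point requiring genuine care — is the bookkeeping of the ``known terms'': one must check that the coefficient of $t^k$ in $HA H^{-1}$ and in $t^{r+1}(\tfrac{\mathrm d}{\mathrm dt}H)H^{-1}$ really does depend on $H_k$ only through $\operatorname{ad}_\Lambda H_k$ plus lower-order data. For $HA$ this is immediate ($H_k$ pairs with $A_0=\Lambda$ on one side), and for $HA H^{-1}$ one uses that $H^{-1}=\id-H_1t+\cdots$ also has its $t^k$-coefficient expressed through $H_1,\dots,H_k$, with the top term $-H_k$, so that the two contributions at order $t^k$ combine exactly into $\Lambda H_k - H_k\Lambda$ modulo known terms; the derivative term contributes nothing at order $t^k$ beyond already-known data since $k-r<k$. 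I would isolate this as a short lemma on the graded structure of the equation and then let the recursion run. Note that the resulting $B(t)$ need not have zero higher-order terms — the theorem only claims diagonalization, not reduction to the leading term — so no further (and potentially resonance-obstructed) normalization is attempted.
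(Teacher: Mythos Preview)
Your argument is correct and is the standard inductive construction of the formal diagonalizing transformation via the homological equation $\operatorname{ad}_\Lambda H_k=(\text{known})-B_k$, solved at each step using the invertibility of $\operatorname{ad}_\Lambda$ on off-diagonal matrices. Note, however, that the paper does not supply its own proof of this theorem: it is quoted in the introduction as a known background result (with a reference to \cite{thebook}*{Theorem~20.7}) to motivate the analogous factorization statements for higher-order operators that form the actual content of the paper. Your proof is precisely the classical one that the cited reference gives, so there is nothing to compare beyond saying that you have correctly reproduced the standard argument.
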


In other words, in the non-resonant case the system can be decomposed into a Cartesian product of one-dimensional equations. Appearance of resonances (multiple eigenvalues of $A_0$) leads to a more involved formal normal form. The analytic reasons for the divergence (in general) of the diagonalizing gauge transform (the Stokes phenomenon) are also well understood, see \cite{thebook}*{\parasymbol 16 and \parasymbol 20}.

\subsection{Higher order linear operators}\label{sec:operators}
The other flavor of the theory deals with linear equations involving only one unknown (scalar) function $u$, but several derivatives. For simplicity we will consider only the \emph{homogeneous} equations of this type, which can always be written under the form
\begin{equation}\label{lode}
 a_0(t)u^{(n)}+a_1(t)u^{(n-1)}+\cdots+a_{n-2}(t)u''+a_{n-1}(t)u'+a_n(t)u=0,
\end{equation}
where $a_0,\dots,a_n\in\Bbbk$ are the coefficients, defined modulo a multiplication by a nonzero Laurent series from $\Bbbk$. In particular, one can assume that the leading coefficient $a_0$ is identically one, or on the contrary, assume that all $a_0,\dots,a_n$ are formal Taylor series (not involving negative powers of $t$). However, it turns out that the smart decision, simplifying many formulations is to use a different derivation when expanding a linear dependence between derivatives of the unknown function.

The equation \eqref{lode} can be rewritten in the operator form. Denote by $\d\:\Bbbk\to\Bbbk$, $\d\:u\mapsto\tfrac{\mathrm d}{\mathrm dt}$ the standard derivation, and identify each element $a\in\Bbbk$ with a ``zero order operator'' $u\mapsto au$. Then the left hand side of \eqref{lode} can be interpreted as the result of application of the differential operator
\begin{equation}\label{lodo}
  L=\sum_{j=0}^n a_j\d^{n-j},\qquad a_0,\dots, a_n\in\Bbbk.
\end{equation}
to the unknown function $u$ (which may well be in any extension of the field $\Bbbk$). The Leibniz rule implies the commutation law
\begin{equation}\label{leib}
  \d t^k=kt^{k-1}\d+t^k,\qquad k\in\Z.
\end{equation}

Denote by $\eu$ the Euler derivation,
\begin{equation}\label{euler}
  \eu=t\,\d=t\,\tfrac{\mathrm d}{\mathrm dt}.
\end{equation}
Then any linear operator of the form \eqref{lodo} can be re-expanded as the sum
\begin{equation}\label{lodo-1}
  L=\sum_{j=0}^n b_j(t)\eu^{n-j}, \qquad b_j\in\Bbbk,
\end{equation}
where the coefficients $b_j\in\Bbbk$ as before are defined modulo a nonzero element in $\Bbbk$. The $\C$-linear space of such operators will be denored $\Bbbk[\eu]$. It is a non-commutative algebra with respect to the operation of composition. The commutation law in $\Bbbk[\eu]$ is given by the formula
\begin{equation}\label{comm}
  \eu^j t^k=t^k(\eu+k)^j, \qquad t\in\Z,\ j\in\Z_+,
\end{equation}
which looks especially simple compared with the law \eqref{leib} extended on arbitrary monomials.

\begin{Def}
A \emph{canonical representation} of an $n$th order linear differential operator is the representation \eqref{lodo-1} in which:
\begin{itemize}
 \item all coefficients $b_0,\dots,b_n\in\C[[t]]$ are formal Taylor polynomials, not involving negative powers of $t$, and at least one of them has a nonzero free term, $b_j(0)\ne0$;
 \item all coefficients $b_0,\dots,b_n$ appear to the left from the symbols of the iterated Euler derivations $\eu^j$.
\end{itemize}
\end{Def}


Alternatively, any operator in the canonical representation can expanded as an infinite series of the form
\begin{equation}\label{oper-series}
  L=\sum_{j\ge 0}t^j p_j(\eu),\qquad p_j\in\C[\eu],\ \max_j\deg_\eu p_j=n=\ord L.
\end{equation}

\begin{Def}\label{def:Fuchsian}
An operator is Fuchsian, or \emph{regular}, if in any canonical representation the leading coefficient is nonvaninishing, $b_0(0)\ne0$. Otherwise it is called \emph{irregular}.

The expansion \eqref{oper-series} corresponds to a Fuchsian operator, if and only if $\deg_\eu p_0=n=\ord L$.
\end{Def}

\begin{Rem}\label{rem:reduction}
A Fuchsian equation \eqref{lodo-1} can be reduced to a Fuchsian system \eqref{ls-2} in the standard way by introducing the formal variables $x_j=\eu^{j-1}u$, $j=1,\dots,n-1$. The condition of regularity is defined for equations with meromorphic coefficients in terms of the growth rate of their solutions. For scalar equations regularity is equivalent to Fuchsianity.

Conversely, a Fuchsian system can be written in the (matrix) operator form as $(\eu-A)X=0$ which \emph{mutatis mutandis} is Fuchsian in the sense of the above Definition.
\end{Rem}

\subsection{Weyl equivalence}
The (infinite-dimensional $\C$-linear) space of differential operators admits no natural action of a gauge transformations group that would be large enough (changes of variable of the form $v=h(t)u$ with a formal series $h\in\C[[t]]$ are obviously insufficient for a meaningful classification). Instead one can use the fact that differential operators form a \emph{noncommutative algebra}. The following definition was suggested in \cite{shira} based on the fundamental work by \Ore.~Ore \cite{ore}.

\begin{Def}
Two linear ordinary differential operators $L,M\in\Bbbk[\eu]$ are \emph{Weyl equivalent}, if there exist two \emph{Fuchsian} operators $H,K\in\Bbbk[\eu]$ such that:
\begin{enumerate}
 \item $MH=KL$, and
 \item $\gcd (H,L)=1$, i.e., there is no nontrivial operator $P\in\Bbbk[\eu]$ such that both $H$ and $L$ are divisible (from the right) by $P$.
\end{enumerate}
\end{Def}
Informally, two operators are Weyl equivalent, if there exists a Fuchsian operator $u\mapsto v=Hu$ which maps in a bijective way solutions of the equation $Lu=0$ to solutions of the equation $Mv=0$. It is not obvious why this is indeed an equivalence relation (in particular, why it is symmetric), yet this can be verified \cite{ore,shira}.

It turns out that the Weyl classification of \emph{Fuchsian} operators is very much similar to that of the gauge equivalence of the Fuchsian systems of linear equations. In particular, see \cite{shira}:
\begin{itemize}
 \item in the generic (non-resonant) case a Fuchsian operator is Weyl equivalent to an Euler operator from $\C[\eu]$ (i.e., with constant coefficients);
 \item in the resonant case the normal form is a composition of \emph{polynomial} first order operators $\eu-\l_j(t)$, $\l_j\in\C[t]$, with the degrees of the polynomials $\l_j$ depending on the combinatorial structure of resonances;
 \item the normal form is Liouville integrable;
 \item for a Fuchsian operator $L$ with holomorphic (convergent) coefficients, the normal form and the conjugating operators $H,L$  also have holomorphic coefficients.
\end{itemize}

\subsection{Non-commutative factorization}
The mere possibility of non-commutative factorization of Fuchsian operators into terms of first order is a simple fact. It suffices to note that any Fuchsian equation always has a solution of the form $u(t)=t^\l v(t)$ with $\l\in\C$ and an invertible series $v\in\C[[t]]$ (in the analytic category this solution is generated by an eigenvector of the monodromy operator). Such solution immediately produces a right Fuchsian factor of order 1 for the corresponding operator. The  difficult part of \cite{shira} is to reduce \emph{simultaneously} all factors to polynomial forms by a suitable Weyl equivalence.

In this paper we discuss a much simpler question on the \emph{possibility} of the noncommutative factorization of irregular differential operators. To the best of our knowledge, this problem was first addressed by B.~Malgrange, who in 1979 sketched a solution in a preprint published only in 2008 \cite{malgrange}. Soon a different proof based on the valuations theory was published by P. Robba \cite{robba}. In both cases the answer was given in terms of the Newton diagram of the differential operator, yet the proof was essentially noncommutative.

An analogous question in the commutative \emph{algebra of pseudopolynomials} $\C[[t]][\xi]$ was first studied by I.~Newton in 1676 in a letter to H.~Oldenburg that was published only in 1960 according to \cite{arnold,brieskorn}. Newton invented his method of a rotating ruler which today is formalized using the Newton polygon (resp., Newton diagram) to solve this problem.

Even in the commutative case the Newton's solution was considerably involved, see \cite{vain-tren} for the modern exposition; an appropriate modification of this proof allows to treat also the noncommutative case of differential operators, see the excellent textbook \cite{vdp-sing}. However, the modern techniques of the singularity theory (blow-up) allow to obtain the same results in a much simpler way.

In our paper we develop a formal technique which allows to transfer \emph{all} results for commutative pseudopolynomials to the noncommutative case of differential operators and outline the similarity between the respective results. In particular, we prove a result that is a direct analogue of Theorem~\ref{thm:diag} (the necessary definitions are introduced below).

\begin{Thm}
Let $L$ be a single-slope differential operator $L$ with the rational slope $r=p/q$, $\gcd(p,q)=1$, and the roots $\l_1,\dots,\l_m\in\C$ of the corresponding characteristic polynomial are nonresonant, i.e., pairwise different.

Then the operator $L$ can be formally decomposed as a noncommutative product of $m$ irreducible operators, $L=L_1\cdots L_m$, with the same slope $r$ having the form $L_j=t^p\eu^q-\l_j+\cdots$.
\end{Thm}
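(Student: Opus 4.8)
The plan is to carry out the ``automatic translation'' advertised above, which reduces the statement to the classical factorization of a pseudopolynomial along a single Newton slope. Equip the operator algebra $\W$ with the filtration adapted to the slope $r=p/q$, assigning to the monomial $t^k\eu^l$ the \emph{weight} $\wgt(t^k\eu^l)=qk-pl\in\Z$ (with $q$ and $-p$ the primitive integer covector annihilating the slope, since $\gcd(p,q)=1$) and extending it to $\W$ by the minimum over the support. The decisive remark is that in the associated graded algebra $\operatorname{gr}\W$ the commutation law \eqref{comm} degenerates: as $\wgt t=q>0$, the term $kt^{k-1}\eu$ obtained by commuting $\eu$ past $t^k$ carries strictly larger weight than $t^k\eu$, so the symbols of $t$ and $\eu$ commute, $\operatorname{gr}\W$ is a commutative, $\Z$-graded polynomial ring, and the ``atom'' $z=t^p\eu^q$ generates its weight-zero component $\C[z]$. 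The single-slope hypothesis says precisely that the minimal-weight level set $\{\wgt=w_0\}$, $w_0=\wgt L$, meets $\supp L$ in a genuine segment with primitive edge vector $(p,q)$; after the usual normalization of $L$ (which is defined up to a left factor from $\Bbbk$) we may take the lower corner of this segment to be the origin, so $w_0=0$ and the leading form $\Lead L\in\operatorname{gr}\W$ is a monic polynomial $\chi(z)$ of degree $m$ in the atom $z$. This $\chi$ is the characteristic polynomial of the statement, with roots $\l_1,\dots,\l_m$.

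Next, replace $\eu$ throughout the canonical representation of $L$ by a commuting indeterminate $\xi$; this produces a pseudopolynomial $g\in\C[[t]][\xi]$ with the same Newton diagram and the same characteristic polynomial $\chi(z)=\prod_{j=1}^m(z-\l_j)$. Since the roots $\l_j$ are pairwise distinct, Newton's theorem on the Newton diagram — in the blow-up formulation imported from local analytic geometry, equivalently Hensel's lemma applied to the coprime splitting of $\chi$ over the complete local ring — provides, in this single-slope setting, the companion commutative decomposition $g=g_1\cdots g_m$ with $g_j=t^p\xi^q-\l_j+\cdots$, each $g_j$ having Newton diagram a single primitive edge and $\deg_\xi g_j=q$. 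The content of the theorem is then that this commutative factorization \emph{lifts}: there exist $L_1,\dots,L_m\in\W$ with $L=L_1\cdots L_m$, $\ord L_j=q$, and $\Lead L_j=t^p\eu^q-\l_j$.

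I would perform the lift by successive approximation along the weight filtration, strictly parallel to the proof of Theorem~\ref{thm:diag}. Assume $L\equiv L_1^{(N)}\cdots L_m^{(N)}$ modulo weight $>N$, with $\Lead L_j^{(N)}=t^p\eu^q-\l_j$; writing $L_j^{(N+1)}=L_j^{(N)}+D_j$ with $D_j$ homogeneous of one weight step higher, the homogeneous defect must be matched by $\sum_j L_1^{(N)}\cdots D_j\cdots L_m^{(N)}$, and passing to $\operatorname{gr}\W$ this becomes a single \emph{commutative} linear equation for the symbols of the $D_j$. The symbols of the $L_j^{(N)}$ are the pairwise coprime polynomials $z-\l_j$ — and here, exactly as in Theorem~\ref{thm:diag}, the non-resonance $\l_i\ne\l_j$ enters — so a B\'ezout/partial-fraction argument in $\operatorname{gr}\W$ solves the equation and advances the induction. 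Irreducibility of each $L_j$ is then automatic: its Newton diagram being a single primitive edge, it has no proper factor of slope $r$, and the shape of $\Lead L_j$ excludes a factor of any other slope.

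The step I expect to be the genuine obstacle is the bookkeeping needed to turn this translation into a theorem rather than a heuristic: one must verify that the linear operator $D\mapsto\sum_j L_1^{(N)}\cdots D_j\cdots L_m^{(N)}$ maps the space of admissible corrections \emph{onto} the relevant weight-homogeneous component of the defect at every step, i.e., that propagating the corrections through successive weight levels creates no secondary resonances. Once the filtration and the graded ring $\operatorname{gr}\W$ have been identified, this is a routine (if tedious) coprimality computation, and the entire analytic substance has been absorbed into the commutative factorization of a pseudopolynomial along a Newton slope, which is the whole point of the reduction. If one prefers, a preliminary ramified substitution $t=s^q$ reduces the problem to the integer-slope case $q=1$, where the atoms are first-order operators, at the price of a descent under the cyclic Galois action permuting them; but this is not needed for the argument above.
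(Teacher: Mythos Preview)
Your proposal is correct and follows the same architecture as the paper: filter $\W$ by the weight adapted to the slope, observe that the associated graded is the commutative algebra of pseudopolynomials, set up an inductive lift along the filtration, and note that at each step the linearized (homological) equation is identical to the commutative one modulo lower-order noise. The one substantive difference is how you propose to establish surjectivity of the homological operator. You plan a direct B\'ezout/partial-fraction argument in $\operatorname{gr}\W$ and flag the bookkeeping (matching the support constraints $\supp D_j\subseteq\D_j$ across weight levels of irregular dimension) as the expected obstacle. The paper sidesteps exactly this obstacle by an indirect trick: it first proves the commutative factorization of pseudopolynomials (via reduction to $\C[[x,y]]$ and blow-up), and then observes that since this factorization exists for \emph{every} choice of higher-weight terms $P_\gamma$, the homological operator must be surjective in every weight. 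Thus the commutative factorization is used not merely as a target to lift, but as a device to prove solvability of the very equations governing the lift. Your route would work too, but the paper's detour buys you precisely the ``tedious coprimality computation'' you anticipated having to do by hand. A minor structural difference: you split into $m$ factors simultaneously, whereas the paper peels them off pairwise via $\sigma=\sigma'\sigma''$ with $\gcd(\sigma',\sigma'')=1$; either works.
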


Note that if the slope is integer, then $q=1$ and the irreducible factors are of order $1$. 

The general factorization statement is given in Theorem~\ref{thm:main} below: its structure is completely analogous to the structure of the classical factorization theorem for pseudopolynomials. We start with a brief recap of the commutative theory in the form most suitable for our purposes.

\subsection{Acknowledgements}
This paper appeared after a thorough rethinking of the thesis of the first author \cite{leanne}. We are grateful to many friends and colleagues who came out with most helpful remarks after hearing conference presentations of the results, especially Jeanne-Pierre Ramis, Michael Singer, Daniel Bertrand, Gal Binyamini and Dmitry Novikov. The second author is incumbent of the Gershon Kekst Chair of Mathematics. 

\section{Pseudopolynomials and their factorization}

\subsection{Pseudopolynomials}
A \emph{pseudopolynomial} is a family of polynomials of degree $\le n=\deg P$ which formally depends on a local parameter $t\in(\C,0)$. The space of pseudopolynomials can be naturally identified with the commutative algebra that in a sense cross-bread between the algebra of polynomials and the algebra of formal Taylor series,
\begin{equation}\label{comm-alg}
\^\Cs=\C[\xi]\otimes_\C\C[[t]].
\end{equation}
Each element of this algebra can be expanded into the formal series
\begin{equation}\label{taylor}
 P(t,\xi)=\sum_{j=0}^\infty t^j p_j(\xi),\qquad p_j\in\C[\xi],\quad \deg P=\sup_{j}\deg p_j<+\infty
\end{equation}
(note the boundedness of $\deg p_j$) or as a formal double sum
\begin{equation}\label{double}
 P(t,\xi)=\sum_{(i,j)\in S}c_{ij}t^j\xi^i,\qquad S\subset \Z^2_+
\end{equation}
The set $S\subset\Z^2_+$ which belongs to the vertical strip $0\le i\le n=\deg P$ is called the \emph{support} of the pseudopolynomial $P$ and denoted by $\supp P$.

\begin{Def}\label{def:NPC}
The Newton polygon $\D_P\subseteq\R_+^2$ of a pseudopolynomial $P\in\^\Cs$ is the minimal closed convex set containing the origin $(0,0)$ and the support $\supp P$, which is invariant by the vertical translation $(i,j)\mapsto (i,j+1)$.
\end{Def}

One can immediately see that the boundary of any Newton polygon consists of two vertical rays over the points $i=0$ and $i=n$ and the graph of a convex piecewise linear function  $\chi_P\:[0,n]\to\R_+$, called the \emph{gap function}. This function is non-decreasing, which implies the following obvious conclusion.

\begin{figure}
  \centering
  \includegraphics[width=0.4\hsize]{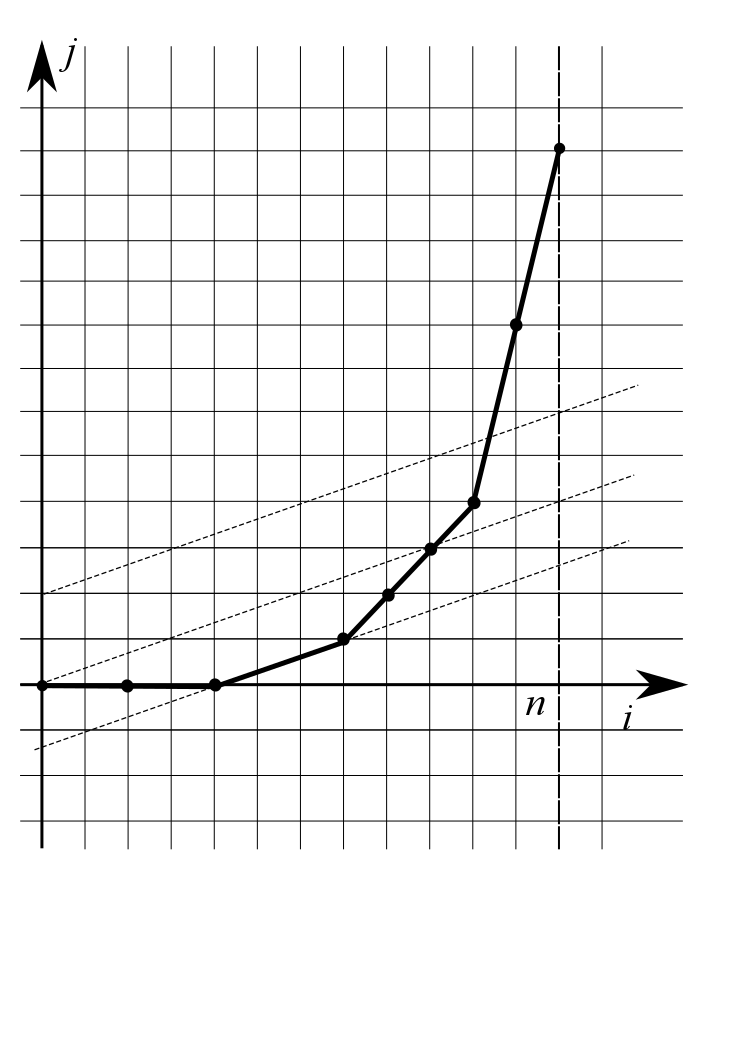}\\
  \caption{Newton diagram of a pseudopolynomial}\label{fig:ND}
\end{figure}

\begin{Prop}\label{prop:left}
If $(i,j)\in \D_P$ and $0\le i'<j$, then $(i',j)\in\D_P$. \qed
\end{Prop}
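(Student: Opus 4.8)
The plan is to read the claim off the structural description of the Newton polygon recorded in the sentence just preceding the statement. Writing $n=\deg P$, that sentence tells us that the boundary of $\D_P$ consists of the two vertical rays over $i=0$ and $i=n$ together with the graph of the convex, piecewise-linear, \emph{non-decreasing} gap function $\chi_P\colon[0,n]\to\R_+$; equivalently,
\[
 \D_P=\bigl\{(i,j)\in\R_+^2:\ 0\le i\le n,\ j\ge\chi_P(i)\bigr\}.
\]
I would take this reformulation for granted, since it merely repeats the description already given, and then the whole Proposition reduces to one line of monotonicity.

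Concretely: suppose $(i,j)\in\D_P$ and $0\le i'\le i$ (this is how I read the hypothesis). Then $0\le i'\le i\le n$, and since $\chi_P$ is non-decreasing, $\chi_P(i')\le\chi_P(i)\le j$; hence $(i',j)$ satisfies the two defining inequalities for $\D_P$, so $(i',j)\in\D_P$. An alternative route that does not even mention the gap function uses only Definition~\ref{def:NPC}: because $(0,0)\in\D_P$ and $\D_P$ is invariant under $(i,j)\mapsto(i,j+1)$, every point $(0,k)$ with $k\in\Z_+$ lies in $\D_P$, and then closedness plus convexity give $\{0\}\times\R_+\subseteq\D_P$; applying convexity to the horizontal segment from $(0,j)$ to $(i,j)$ then yields $(i',j)\in\D_P$ for all $i'\in[0,i]$.

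There is essentially no obstacle. The only point worth flagging is that the argument genuinely exploits monotonicity of $\chi_P$ — equivalently, the ``upper-left openness'' of $\D_P$ forced by the upward translation invariance together with $(0,0)\in\D_P$ — rather than mere convexity; but this is precisely the property isolated in the line immediately before the statement, so nothing new has to be established. (I would also remark that the hypothesis is meant to read $0\le i'\le i$.)
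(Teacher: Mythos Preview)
Your argument is correct and matches the paper's own treatment: the paper gives no proof beyond the sentence ``This function is non-decreasing, which implies the following obvious conclusion'' and appends \qed, so your one-line monotonicity argument is exactly what is intended. Your remark that the hypothesis should read $0\le i'\le i$ rather than $0\le i'<j$ is also well taken.
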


\begin{Rem}\label{rem:local}
This definition is an obvious modification of the standard notion of the Newton polygon $\D_f$ for Taylor series $f$ from $\C[[x,y]]$, defined as the minimal closed convex set which contains the support $\supp f$ and is invariant by the shifts $(i,j)\mapsto (i,j+1)$ and $(i,j)\mapsto (i+1,j)$, see \cite{wall}. It suffices to notice that for a pseudopolynomial $P(t,\xi)$ of degree $n$ the  Laurent series $f(x,y)=y^n P(x,\frac 1y)$ does not involve negative powers of $y$. The (usual) Newton polygon for $f$ is obtained by reflection of $\D_P$ in the horizontal axis and shift upwards by $n$.
\end{Rem}

The following properties of the gap function immediately follow from its construction:
\begin{enumerate}
 \item $\chi$ is defined on $[0,n]$ and $\chi(0)=0$;
 \item $\chi$ is convex, monotone non-decreasing and piecewise-linear (more accurately, piecewise-affine);
 \item $\chi$ may be non-differentiable at a point $i\in[0,1]$ if and only if $i$ and $\chi(i)$ are both integer numbers (in which case we call $(i,j)\in\D_P$ the called the \emph{corner point} or the \emph{vertex} of $\D_P$).
\end{enumerate}

\begin{Rem}
The inverse to the gap function is the smallest concave majorant of the \emph{degree function} $j\mapsto \deg p_j$ derived from the expansion \eqref{taylor}.
\end{Rem}

\begin{Def}
The union of all finite edges of the Newton polygon $\D_P$ is called the \emph{Newton diagram} of the pseudopolynomial $P$ and denoted by $\G_P$. Thus the Newton polygon is the epigraph of the gap function.
\end{Def}

\begin{Def}\label{def:admissible}
A closed convex polygon $\D\subset\R^2_+$ which is the epigraph of a convex piecewise-linear function $\chi=\chi_\D$ as above, is called \emph{admissible}. The function $\chi=\chi_\D$ will be called the \emph{gap function} for $\D$.

Collection of different slopes (derivatives) of the affine pieces of the function $\chi$, all of them nonnegative rational numbers, will be called the \emph{Poincar\'e spectrum} $\PS(\D)\subset\Q_+$ of the polygon $\D$.
\end{Def}

\begin{Def}
We call a pseudopolynomial $P$ (resp., its Newton polygon $\D$) a \emph{single-slope} pseudopolynomial (resp., polygon), if its Poincar\'e spectrum consists of a single value, $\PS(P)=\{\rho\}$. The corresponding gap function is linear on some segment $[0,d]$, $\chi_P(i)=\rho i$, $\rho\in\Q_+$. The value $\rho=0$ is not excluded.
\end{Def}

\begin{Ex}\label{ex:FuchsianPP}
By this definition $\PS(P)=\{0\}$ if and only if $\deg P=\max_j \deg p_j=\deg p_0$. We call such (single-slope) pseudopolynomials \emph{Fuchsian}, cf.~with Definition~\ref{def:Fuchsian}.
\end{Ex}

\begin{Rem}
The reason why the collection of slopes is referred to as the Poincar\'e spectrum, is as follows. A linear system \eqref{ls-2} of Poincar\'e rank $r\in\Z_+$ can be written as $t^r\eu X=A(t)X$, and after reduction to a scalar equation as explained in Remark~\ref{rem:reduction}, it will generically produce a single-slope operator with the integer slope $r$.
\end{Rem}

\subsection{Newton polygon of a product}
The key property of the Newton polygon is its ``logarithmic behavior'' with respect to multiplication in $\^\Cs$, which generalizes the geometry of superscripts in the identity $\xi^n\xi^m=\xi^{n+m}$.

\begin{Prop}\label{prop:minksum}
For any $P,Q\in\^\Cs$,
\begin{equation}\label{NPlog}
 \D_{PQ}=\D_P+\D_Q,
\end{equation}
where the right hand side is the Minkowsky sum $\{u+v\:u\in \D_P,\ v\in \D_Q\}$. \qed
\end{Prop}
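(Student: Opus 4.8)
The plan is to prove the two inclusions in \eqref{NPlog} separately: the inclusion $\D_{PQ}\subseteq\D_P+\D_Q$ is elementary, while the reverse one carries the only real content, the non-cancellation of leading parts along each slope. Write $P=\sum c_{ij}t^j\xi^i$ and $Q=\sum d_{kl}t^l\xi^k$. Every monomial occurring in $PQ$ has the form $c_{ij}d_{kl}\,t^{j+l}\xi^{i+k}$ with $(i,j)\in\supp P$ and $(k,l)\in\supp Q$, so $\supp(PQ)\subseteq\supp P+\supp Q\subseteq\D_P+\D_Q$. The set $\D_P+\D_Q$ is convex, contains the origin $(0,0)=(0,0)+(0,0)$, and is invariant under the vertical shift; moreover it is closed, being the epigraph over $[0,\deg P+\deg Q]$ of the infimal convolution $\chi_P\mathbin{\square}\chi_Q$ of the two gap functions, again a convex, non-decreasing, piecewise-linear function with slopes drawn from $\PS(P)\cup\PS(Q)$. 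By the minimality in Definition~\ref{def:NPC} it follows that $\D_{PQ}\subseteq\D_P+\D_Q$.

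For the reverse inclusion I would compare supporting lines. Two closed convex subsets of $\R^2_+$ that lie in a common vertical strip and have the vertical ray as recession cone coincide provided their supporting lines agree in every direction $w=(a,b)$ with $b>0$, together with the horizontal direction $w=(-1,0)$ which fixes the width of the strip. For a Minkowski sum one has $\nu_w(\D_P+\D_Q)=\nu_w(\D_P)+\nu_w(\D_Q)$ automatically, where $\nu_w(\D):=\min\{ai+bj:(i,j)\in\D\}$, and $\nu_w(\D_P)=\min\{ai+bj:(i,j)\in\supp P\}=:\nu_w(P)$ by convexity. The inequality $\nu_w(PQ)\ge\nu_w(P)+\nu_w(Q)$ is immediate from the support inclusion of the first paragraph, so everything reduces to the opposite inequality for each such $w$.

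The crux, and the step I expect to be the main obstacle, is that non-cancellation. Fix $w$ and let $P_w$ (resp.\ $Q_w$) be the sum of those monomials of $P$ (resp.\ $Q$) on which $ai+bj$ attains its minimum; these are the ``$w$-leading parts'', supported on a single edge (or vertex) of the respective Newton diagram. Every monomial of $P_wQ_w$ has $ai+bj$ equal to $\nu_w(P)+\nu_w(Q)$, so once we know $P_wQ_w\ne0$ we get $(PQ)_w=P_wQ_w$ and hence $\nu_w(PQ)=\nu_w(P)+\nu_w(Q)$, which finishes the proof. For $w=(-1,0)$ this is just $\deg PQ=\deg P+\deg Q$, with $P_w,Q_w$ the nonzero top coefficients in $\xi$ whose product is nonzero in the integral domain $\C[[t]]$. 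For $w=(a,b)$ with $b>0$ we may normalize $w=(-\rho,1)$ with $\rho=p/q\in\Q_+$, $\gcd(p,q)=1$, equal to the slope of the relevant edge; here a priori cancellations among the monomials sharing the value $j-\rho i$ must be excluded. The clean way is to pass to the associated graded ring for the filtration by the value of $j-\rho i$: after the ramified substitution $t\mapsto\tau^q$ this value becomes, up to the scalar $1/q$, a genuine $\Z$-grading of $\C[[\tau]][\xi]$, whose associated graded ring is again a polynomial ring, in particular an integral domain, so the images of the nonzero homogeneous elements $P_w$ and $Q_w$ have nonzero product; pulling back gives $P_wQ_w\ne0$. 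Combined with the inclusion of the first paragraph this yields $\D_{PQ}=\D_P+\D_Q$. The bookkeeping (closedness of the Minkowski sum, reduction of rational slopes to the integer case) is routine; the content is entirely concentrated in the fact that initial forms multiply, which rests on the integrality of the polynomial ring.
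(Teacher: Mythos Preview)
Your overall strategy is the standard one and is in the same spirit as the paper's sketch. The paper records the easy inclusion $\supp(PQ)\subseteq\supp P+\supp Q$ and then simply asserts that ``the coefficients corresponding to the \emph{corner} points of $\D_P+\D_Q$ cannot vanish because of such cancellation'', leaving the verification implicit; the point is that an extreme point of a Minkowski sum decomposes uniquely, so a single nonzero product $c_{ij}d_{kl}$ survives. Your route via support functions $\nu_w$ and multiplicativity of $w$-initial forms is the natural way to make this precise, and the integrality argument for $P_wQ_w\ne0$ is correct. Compared with the paper's vertex argument, your edge-by-edge check is a bit more than needed (at a genuine vertex the initial form is a single monomial, so no ramified substitution is required), but it meshes nicely with the graded formalism of \secref{sec:qhg}.

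There is, however, one genuine slip. You write ``$\nu_w(\D_P)=\min\{ai+bj:(i,j)\in\supp P\}$ by convexity'', but this fails exactly because Definition~\ref{def:NPC} forces $(0,0)\in\D_P$ even when $(0,0)\notin\supp P$: for $w=(-\rho,1)$ with $\rho$ strictly smaller than the least slope of $\G_P$, the minimum of the linear form on $\D_P$ is attained at the artificially inserted origin, not on the support. This is not a defect of your method but of the statement as literally formulated: take $P=t$ and $Q=\xi$, so that $\D_P=\{0\}\times\R_+$, $\D_Q=[0,1]\times\R_+$ and $\D_P+\D_Q=[0,1]\times\R_+$, whereas $PQ=t\xi$ has $\D_{PQ}=\{(i,j):0\le i\le 1,\ j\ge i\}\subsetneq\D_P+\D_Q$. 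Your proof (and the paper's) becomes correct once the spurious origin is dropped from Definition~\ref{def:NPC}, or under the harmless normalization that both factors have a monomial with $t$-exponent~$0$ lying on the first edge, so that the origin is a genuine support point; you should flag this hypothesis explicitly.
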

For monomials this follows from the identity for their (one-point) supports, $\supp (t^{i+i'}\xi^{j+j'})=\supp (t^i\xi^j)+\supp (t^{i'}\xi^{j'})$. This immediately implies the inclusions
\begin{equation}\label{additive}
  \supp(PQ)\subseteq\supp (P)+\supp (Q),\qquad\text{hence}\qquad \D_{PQ}\subseteq\D_P+\D_Q.
\end{equation}
The inclusion for supports can be strict, since a lattice point from $\supp (P)+\supp (Q)$ can be represented in several possible ways as the sum of points from $\supp(P)$ and $\supp(Q)$. A cancellation of different contributions is possible so that the corresponding coefficient of $PQ$ could be zero. The not-so-obvious claim is that the coefficients corresponding to the \emph{corner} points of $\D_P+\D_Q$ cannot vanish because of such cancellation.

\begin{Cor}
$$
 \PS(P+Q)=\PS(P)\cup \PS(Q).\qed
$$
\end{Cor}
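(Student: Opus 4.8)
The plan is to derive this directly from Proposition~\ref{prop:minksum}; the Corollary simply records the behaviour of the Poincar\'e spectrum under multiplication. Since $\D_{PQ}=\D_P+\D_Q$ and the Poincar\'e spectrum of a polygon (Definition~\ref{def:admissible}) is by construction the set of slopes of its finite edges, it suffices to prove the purely convex-geometric identity
\begin{equation*}
  \PS(\D_1+\D_2)=\PS(\D_1)\cup\PS(\D_2)
\end{equation*}
for any two admissible polygons $\D_1,\D_2\subset\R^2_+$ (the right-hand side being a Minkowski sum of polygons), and then specialize to $\D_P,\D_Q$.

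The key step is the elementary description of faces of a Minkowski sum. For $w\in\R^2$ let $F_w(\D)$ denote the face of $\D$ along which the linear functional $(i,j)\mapsto\langle w,(i,j)\rangle$ attains its minimum; the standard fact I will invoke is $F_w(\D_1+\D_2)=F_w(\D_1)+F_w(\D_2)$. Taking $w=w_\rho:=(-\rho,1)$ for a rational $\rho\ge0$, this functional becomes $j-\rho i$, which on an admissible polygon $\D$ is minimized along the edge of slope $\rho$ when $\rho\in\PS(\D)$ and at a single vertex otherwise. Hence $F_{w_\rho}(\D_1+\D_2)$, being the Minkowski sum of two faces each carried by a line of slope $\rho$, is an edge of slope $\rho$ precisely when at least one of $F_{w_\rho}(\D_1),F_{w_\rho}(\D_2)$ is an edge, i.e. precisely when $\rho\in\PS(\D_1)\cup\PS(\D_2)$. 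Letting $\rho$ range over $\Q_+$ shows that the set of slopes of the finite edges of $\D_1+\D_2$ is exactly $\PS(\D_1)\cup\PS(\D_2)$: edges of $\D_1$ and of $\D_2$ that happen to share a slope simply merge into one longer edge, while the two vertical rays (at $i=0$ and at $i=\deg P+\deg Q=\deg(PQ)$) contribute no finite slope, so nothing is lost or added at the ends.

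I expect no genuine obstacle here. An equivalent and perhaps more transparent formulation is that $\chi_{\D_1+\D_2}$ is the \emph{infimal convolution} $i\mapsto\min\{\chi_{\D_1}(i_1)+\chi_{\D_2}(i_2):i_1+i_2=i\}$ of the two gap functions, whose graph is obtained by concatenating the affine pieces of $\chi_{\D_1}$ and $\chi_{\D_2}$ in order of increasing slope. The only point that truly deserves a line of justification is that a slope cannot \emph{disappear} under the sum: this is immediate, since $F_{w_\rho}(\D_1)+F_{w_\rho}(\D_2)$ contains a translate of each summand face and is therefore at least one-dimensional as soon as one of them is.
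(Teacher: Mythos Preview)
Your proposal is correct and matches the paper's intent: the Corollary is stated with a bare \qed\ and no argument, i.e.\ the paper regards it as an immediate consequence of Proposition~\ref{prop:minksum}, which is exactly the reduction you perform. Your face-of-Minkowski-sum / infimal-convolution argument is a sound way to spell out the convex-geometric step the paper leaves implicit; note also that the ``$P+Q$'' in the displayed formula should be read as $PQ$ (equivalently, as the Minkowski sum $\D_P+\D_Q$), as you correctly inferred.
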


As follows from the Proposition~\ref{prop:minksum}, the problem of factorization of a pseudopolynomial $R\in\^\Cs$ reduces (although is \emph{not equivalent}) to the problem of representing the Newton polynomial $\D=\D_R$ as the Minkowski sum of two \emph{admissible} polynomials, $\D=\D'+\D''$. The admissibility constraints (nonnegativity, vertices only at the integer points of the lattice, two vertical bounding rays etc.) imply the following two geometrically rather obvious statements.

\begin{Lem}
An admissible \textup(in the sense of Definition~\ref{def:admissible}\textup) polygon is \emph{indecomposable}, i.e., cannot be represented as a Minkowski sum of two nontrivial admissible polygons, if and only if it has a single slope and the non-vertical edge carries no lattice points of $\Z^2_+$.

Any admissible polygon can be decomposed into the Minkowski sum of the indecomposable polygons.
\end{Lem}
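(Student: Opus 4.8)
\emph{Strategy.} Everything here reduces to understanding Minkowski sums within the class of admissible polygons, so I would start by setting up that dictionary. An admissible polygon $\D$ of degree $n=\deg\D$ is the epigraph of its gap function $\chi_\D\:[0,n]\to\R_+$, and every vertex of $\D$ — the origin, each breakpoint of $\chi_\D$, and the right corner $(n,\chi_\D(n))$ where $\G_\D$ meets the second bounding vertical ray — lies in $\Z^2_+$. Hence the two endpoints of each non-vertical edge are lattice points, so an edge of slope $\rho=p/q$ with $\gcd(p,q)=1$ has horizontal extent equal to a positive multiple of $q$, and its relative interior contains a point of $\Z^2_+$ if and only if that multiple is $\ge2$. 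The one load-bearing structural fact I would prove next is this: for admissible $\D',\D''$ the Minkowski sum $\D'+\D''$ is again admissible, its gap function is the infimal convolution $(\chi_{\D'}\square\chi_{\D''})(i)=\min\{\chi_{\D'}(i')+\chi_{\D''}(i'')\:i'+i''=i\}$ on $[0,\deg\D'+\deg\D'']$, and its Newton diagram is obtained by listing together the non-vertical edges of $\G_{\D'}$ and $\G_{\D''}$, sorting them by non-decreasing slope, and merging edges of equal slope by adding their horizontal extents. This is the usual ``merge-sort'' picture for edges of a Minkowski sum of convex polygons; the only point needing a check within our class is that the new corner points still land in $\Z^2_+$, which holds because each of them is a sum of a vertex of $\D'$ and a vertex of $\D''$, hence of two lattice points. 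In particular $\PS(\D'+\D'')=\PS(\D')\cup\PS(\D'')$, and if a slope occurs in both summands the corresponding horizontal extents add.

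With this fact the characterization of indecomposables is bookkeeping. For ``$\Leftarrow$'', let $\D$ be single-slope of slope $\rho=p/q$, $\gcd(p,q)=1$, whose edge carries no point of $\Z^2_+$ other than its two endpoints; by the dictionary this forces $\deg\D=q$. If $\D=\D'+\D''$ with both summands nontrivial (degree $\ge1$), each has a nonempty Poincar\'e spectrum contained in $\PS(\D)=\{\rho\}$, hence is single-slope of slope $\rho$ with gap function $i\mapsto\rho i$; admissibility then forces $q\mid\deg\D'$ and $q\mid\deg\D''$, so $\deg\D'+\deg\D''\ge2q>q$, a contradiction. For ``$\Rightarrow$'', suppose $\D$ is not of this shape. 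If $\chi_\D$ has at least two distinct slopes, cut $\G_\D$ at its first interior breakpoint $v=(i_0,\chi_\D(i_0))$ with $0<i_0<\deg\D$ (a lattice point): the polygons with gap functions $\chi_\D|_{[0,i_0]}$ and $i\mapsto\chi_\D(i_0+i)-\chi_\D(i_0)$ are admissible and nontrivial, and by the structural fact their Minkowski sum is $\D$. If instead $\D$ is single-slope of slope $\rho=p/q$ but $\deg\D=kq$ with $k\ge2$, then $\D$ is the $k$-fold Minkowski sum of the primitive slope-$\rho$ polygon (the one whose single edge joins $(0,0)$ to $(q,p)$). Either way $\D$ is decomposable.

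For the existence of a decomposition into indecomposables I would then use a trivial induction on $\deg\D$: if $\D$ is indecomposable there is nothing to prove, and otherwise $\D=\D'+\D''$ with $1\le\deg\D',\deg\D''<\deg\D$, so the induction hypothesis applies to each summand. Explicitly this expresses $\D$ as the iterated Minkowski sum, over its edges $e_j$ of slope $\rho_j=p_j/q_j$ and horizontal extent $k_jq_j$, of $k_j$ copies of the primitive slope-$\rho_j$ polygon of degree $q_j$, which is indecomposable by the ``$\Leftarrow$'' part proved above; by the merge-sort description this total sum has precisely the edges of $\G_\D$ and therefore equals $\D$.

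The genuinely non-combinatorial step is the structural fact of the first paragraph — identifying the Minkowski sum with the infimal convolution and verifying that admissibility, and in particular the lattice property of the corner points, is preserved. Once that is in place the remaining arguments are routine, and the same description immediately yields uniqueness of the decomposition if one wants it.
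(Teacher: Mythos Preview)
Your proposal is correct and follows essentially the same route as the paper: first reduce to single-slope pieces, then observe that (in)decomposability of a single-slope admissible polygon is a one-dimensional lattice statement about the segment forming its non-vertical edge. The paper's own proof is a two-sentence sketch that tacitly assumes exactly the structural fact (the edge merge-sort description of Minkowski sums and the resulting identity $\PS(\D'+\D'')=\PS(\D')\cup\PS(\D'')$) that you take the trouble to spell out; your version is simply a fully detailed rendering of that sketch.
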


\begin{proof}
It can be immediately verified that any admissible polygon can be decomposed into the Minkowski sum of the single-slope polygons. The claim of (in)decomposability for the single-slope polygons is essentially a one-dimensional statement about lattice segments in $\Z^1$.
\end{proof}

\subsection{Quasihomogeneous pseudopolynomials}\label{sec:qhg}
Let $w\in\Q_+$ be a nonnegative rational number and $\wgt=\wgt_w\:\Z^2\to\Q$ the weight function which associates to a monomial $t^j\xi^i$ the weight $\wgt(t^j\xi^i)=\wgt(i,j)=j-wi$.

\begin{Def}
A pseudopolynomial is called $w$-quasihomogeneous, if all its monomials have the same $w$-weight $\alpha\in\Q$,
$$P_\alpha=\sum_{(i,j)\:\wgt_w(i,j)=\alpha}c_{ij}t^j\xi^i.$$
One can instantly see that support of a quasihomogeneous polynomial belongs to a line with the slope $w$ and is finite (i.e., $P_\alpha\in\C[t,\xi]$ is a genuine polynomial).
\end{Def}

A quasihomogeneous polynomial of \emph{weight zero} is essentially a polynomial of a single variable. If $w=p/q$ is an irreducible fraction, then all monomials of weight zero are necessarily powers of the generating monomial $t^q\xi^p$ of weight zero, thus $P_0(t,\xi)=\sigma(t^q\xi^p)$ for some $\sigma=\sigma_P\in\C[\l]$. It is always reducible: if $\l_1,\dots,\l_k$ are the complex roots of $\sigma$, then
$$
P_0(t,\xi)=c\prod_{s=1}^k (\l_s-t^q\xi^p),\qquad c\in\C,\ c\ne0,\quad \sigma_P(\l_s)=0.
$$
An arbitrary quasihomogeneous (pseudo)polynomial $P_\alpha$ of weight $\alpha$ can be represented as a nontrivial monomial of weight $\alpha$ times a quasihomogeneous polynomial of weight zero. To make this representation unique, we will require that this quasihomogeneous polynomial is \emph{without zero roots},
$$
P_\alpha(t,\xi)=ct^j\xi^i\cdot \prod_{s=1}^k (\l_s-t^q\xi^p),\qquad c\prod_s\l_s\ne 0,\quad \sigma_P(\l_s)=0,\ \wgt(i,j)=\alpha.
$$

\begin{Def}\label{def:char-poly}
The univariate polynomial $\sigma=\sigma_P$ introduced by the above construction, is called the \emph{characteristic polynomial} of the quasihomogeneous pseudopolynomial $P$. Its (nonzero) roots are called \emph{characteristic numbers}.
\end{Def}


\subsection{Graded algebra of pseudopolynomials}
Let as before $w\in\Q_+$ be a rational weight and $\wgt(\cdot)$ the corresponding weight. The algebra $\^\Cs$ is naturally \emph{graded} by this weight, i.e., represented as a countable direct sum,
\begin{equation}\label{graded}
  \^\Cs=\bigoplus_{\alpha\in\Q}\Cs_\alpha,\qquad \Cs_\alpha=\{P\in\^\Cs :\wgt P=\alpha\}.
\end{equation}
The index $\alpha$ effectively ranges over the set $\Z_+-w\Z_+\in\Q$ which is completely ordered (i.e., it is discrete, bounded from below and unbounded from above). All other terms $\Cs_\alpha$ are trivial. This grading agrees with the structure of algebra:
\begin{equation}\label{gr-alg}
  \Cs_\alpha\cdot\Cs_\beta\subseteq\Cs_{\alpha+\beta},\qquad \forall \alpha,\beta\in\Q.
\end{equation}
Consequently, any pseudopolynomial $P\in\^\Cs$ can be expanded as a series $P=\sum_{\alpha\in\Q}P_\alpha$, which is in general infinite but always has a well-defined $w$-\emph{leading term} $P_*$ of the minimal weight $\alpha_*=\min \wgt\big|_{\D_P}$. It is tempting to use the imperfect notation $P=P_*+\cdots$ to state the corresponding fact.


\subsection{Commutative factorization problem}\label{sec:comm-factorization}
We will focus on the following special form of the factorization problem for pseudopolynomials: given $P\in\^\Cs$ with a Newton polygon $\D=\D_P$ and an admissible decomposition $\D=\D'+\D''$ into the Minkowski sum, construct a factorization $P=QR$ with $\D_Q=\D'$, $\D_R=\D''$. Some additional assumptions will be required.

\begin{Ex}
Assume that $P$ is a Fuchsian pseudopolynomial with pairwise distinct characteristic numbers $\l_1,\dots,\l_d$. Then its Newton polygon decomposes as the Minkowski sum of $d$ identical copies of $[0,1]\times\R_+$, so one can expect that it factors as a product of $d$ linear pseudopolynomials of the form
$P(t,\xi)=c(t)\prod_{s=1}^d (\xi-\boldsymbol \l_s(t))$. This is indeed the case, as follows from the (formal) Implicit Function Theorem: if all roots of $p_0$ are simple, $p_0'(\l_s)\ne 0$, then each root can be expressed as $\xi_s=\boldsymbol\l_s(t)\in\C[[t]]$, $\boldsymbol\l_s(0)=\l_s$.
\end{Ex}

\subsubsection{Factorization in $\C[[x,y]]$}
The factorization problem for pseudopolynomials can be instantly reduced to that for formal series in two variables, as mentioned in Remark~\ref{rem:local}. The factorization problem for such objects is well known, see \cite{wall}. If the series were convergent, then this would be the problem of determining all irreducible branches of the germ of a planar analytic curve in $\{f(x,y)=0\}\subset(\C^2,0)$. The answer is determined by the (classical) Newton diagram of the germ $f$ which is the graph of a piecewise affine convex function $\chi_f\:\R^+\to\R^+$ which \emph{decreases} to zero at some point $\le d$, cf.~with Remark~\ref{rem:local}. Slopes of this function are negative; as before, $f$ is called \emph{single-slope}, if its Newton diagram consists of a single edge.

\begin{Thm}\label{thm:locA}
A formal series $f\in\C[[x,y]]$ admits factorization into single-slope series $f=f_1\cdots f_m$.  \qed
\end{Thm}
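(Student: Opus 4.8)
The plan is to reduce the factorization of $f\in\C[[x,y]]$ to an induction on the number of distinct slopes of its Newton diagram $\chi_f$, performing one ``slope-peeling'' step at a time. The base case (a single-slope series) is the claim itself, so the substance is the inductive step: given $f$ whose Newton diagram has $m\ge 2$ edges, produce a factorization $f=f'\cdot g$ where $f'$ is single-slope (carrying the steepest, i.e.\ most negative, edge) and $g$ has one fewer slope, then apply the induction hypothesis to $g$.

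First I would fix the weight $w=p/q$ (in lowest terms) equal to minus the slope of the edge of $\chi_f$ adjacent to the $y$-axis, and pass to the $w$-graded (equivalently $w$-quasihomogeneous) decomposition of the completed local ring, exactly as in \secref{sec:qhg}. The $w$-leading term $f_*$ is a quasihomogeneous polynomial supported on that edge; by the discussion in \secref{sec:qhg} it factors (after pulling out a monomial) as $\sigma(x^q y^p)$ up to a unit, where $\sigma\in\C[\l]$ is the characteristic polynomial of that edge. Its nonzero roots $\l_1,\dots,\l_k$ (listed with multiplicity) are the characteristic numbers; grouping the distinct ones gives a coprime factorization $\sigma=\sigma_1\sigma_2$ with $\sigma_1$ the part whose roots we wish to split off. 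This yields a factorization of the \emph{leading term} $f_*=g_*\,h_*$ into coprime quasihomogeneous factors, and the task is to lift it to a factorization of $f$ itself.

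The lifting is a Hensel-type argument in the graded algebra. Ordering the weight lattice $\Z_{\ge0}-w\Z_{\ge0}$ as in \eqref{graded}, I would seek $g=g_*+\sum_{\alpha>\alpha_g}g_\alpha$ and $h=h_*+\sum_{\beta>\beta_h}h_\beta$ with $\alpha_g+\beta_h=\alpha_*=\min\wgt|_{\D_f}$, solving the equation $f=gh$ weight by weight. At each weight level one must solve a B\'ezout equation of the form $g_*\,v+h_*\,u=(\text{known lower-order data})$ for the next homogeneous corrections $u,v$; this is solvable, with control on the Newton polygon of the solution, precisely because $g_*$ and $h_*$ are coprime as quasihomogeneous polynomials — equivalently because $\sigma_1$ and $\sigma_2$ share no root. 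One checks that the resulting $g$ and $h$ have the predicted Newton polygons (so the induction on the number of slopes genuinely decreases the complexity of the cofactor), using Proposition~\ref{prop:minksum}.

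The main obstacle is the ramification: when $q>1$ the ``variable'' $x^q y^p$ is not a coordinate, and the naive Hensel iteration need not terminate within $\C[[x,y]]$ — one is really working over the Puiseux-type covering $x\mapsto x^q$. I would handle this by either (i) performing a weighted blow-up / substitution $x=s^q$, $y=s^{-p}\eta$ (the ``rotating ruler'' move) to reduce to the case of a single coordinate on each chart, factoring there by the implicit function theorem applied to the now-simple roots of $\sigma_1$, and then descending the factorization back by a Galois-averaging argument over the cyclic group of order $q$; or (ii) carrying the ramification explicitly through the graded Hensel lemma, noting that although $g,h$ a priori live in $\C[[x^{1/q},y]]$, the $\Z/q$-invariance of $f$ forces the product of the Galois conjugates of $g$ — which is the true single-slope factor $f'$ — back into $\C[[x,y]]$. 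Either route reduces everything to the already-cited solvability of the B\'ezout equation for coprime quasihomogeneous polynomials and to the formal implicit function theorem; the bookkeeping of weights and Newton polygons, while not difficult, is where all the care is needed.
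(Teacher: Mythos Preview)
The paper does not actually prove this theorem: it is stated with a \qed as a classical fact about formal plane curve germs (cf.\ Remark~\ref{rem:local} and \cite{wall}), and the sketch under ``About the proofs'' points to desingularization by monomial blow-ups \eqref{bup} followed by the implicit function theorem or Weierstrass preparation. Your route~(i) is precisely this blow-up strategy, so at that level you and the paper agree.

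Your route~(ii), the direct graded Hensel argument, is the older Newton approach and can be made to work, but two points need repair. First, you conflate Theorems~\ref{thm:locA} and~\ref{thm:locB}: for the slope-peeling step of Theorem~\ref{thm:locA} the seed factorization $f_*=g_*h_*$ is \emph{not} a coprime split $\sigma=\sigma_1\sigma_2$ of the characteristic polynomial; rather, one factor carries the \emph{entire} polynomial $\sigma$ (the whole edge, all nonzero characteristic roots) and the other is the \emph{monomial} sitting at the corner vertex where that edge meets the rest of the diagram --- compare the data $(Q_*,R_*)$ in the hypotheses of Theorem~\ref{thm:homA}. Second, and more important for the paper's internal logic: you cannot simply invoke solvability of the B\'ezout equation ``with control on the Newton polygon'', because the paper \emph{derives} surjectivity of the homological operator (Theorems~\ref{thm:homA}--\ref{thm:homB}) \emph{from} Theorems~\ref{thm:locA}--\ref{thm:locB}, not the other way around, explicitly warning in the Sylvester-map discussion that a direct dimension count is delicate since $\dim\Cs_\alpha(\D)$ varies irregularly with $\alpha$. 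A self-contained Hensel proof would therefore require you to establish that surjectivity independently --- which is possible, but is exactly the work the paper sidesteps by citing the blow-up factorization.
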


With a single-slope series $f$ one can associate the its leading part (a quasihomogeneous polynomial), the corresponding characteristic polynomial $\sigma=\sigma_f(\l)$ and its (nonzero) roots $\l_1,\dots,\l_d$, the \emph{characteristic numbers}, exactly as in \secref{sec:qhg}. The only difference is that the weights assigned to $x,y$ are both natural numbers. Obviously, $\sigma_{f_1f_2}=\sigma_{f_1}\sigma_{f_2}\in\C[\l]$.

\begin{Thm}\label{thm:locB}
Assume that the characteristic numbers of a single-slope series $f\in\C[[x,y]]$ form two disjoint groups so that $\sigma_f(\l)=\sigma_1(\l)\sigma_2(\l)$ and $\gcd(\sigma_1,\sigma_2)=1$.

Then $f$ admits factorization $f=f_1f_2$ so that $\sigma_{f_i}=\sigma_i$, $i=1,2$. \qed
\end{Thm}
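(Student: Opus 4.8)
The plan is to prove this by a Hensel-type lifting argument, organized along the single edge of the Newton diagram of $f$. Since $f$ is single-slope with (negative) slope determined by weights $(a,b)$ assigned to $(x,y)$, write $f=f_*+\cdots$ where $f_*$ is the leading quasihomogeneous polynomial, and recall $f_*(x,y)=c\,x^{i_0}y^{j_0}\cdot\sigma_f(x^a y^b)$ up to a monomial factor (using the description in \secref{sec:qhg}, adapted to natural weights). The hypothesis $\sigma_f=\sigma_1\sigma_2$ with $\gcd(\sigma_1,\sigma_2)=1$ gives a factorization of the leading term $f_*=g_*h_*$ into quasihomogeneous pieces with $\sigma_{g_*}=\sigma_1$, $\sigma_{h_*}=\sigma_2$ (distributing the monomial prefactor arbitrarily, say all of it to $g_*$). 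The goal is to upgrade this to $f=gh$ with $g=g_*+\cdots$, $h=h_*+\cdots$, the dots being terms of strictly higher weight.

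First I would set up the weighted grading $\^\Cs=\bigoplus_\alpha\Cs_\alpha$ of \eqref{graded}–\eqref{gr-alg} (equivalently on $\C[[x,y]]$), so that the unknowns are $g=\sum_{\alpha\ge\alpha_g}g_\alpha$ and $h=\sum_{\alpha\ge\alpha_h}h_\alpha$ with $g_{\alpha_g}=g_*$, $h_{\alpha_h}=h_*$. Collecting terms of weight $\alpha_g+\alpha_h+k$ in the equation $f=gh$ and separating the two unknown top contributions at that level, one gets the recursive ``homological'' equation
\begin{equation*}
  g_*\,h_{\alpha_h+k}+g_{\alpha_g+k}\,h_* \;=\; \Phi_k,
\end{equation*}
where $\Phi_k$ is the weight-$(\alpha_g+\alpha_h+k)$ component of $f$ minus all products $g_\beta h_\gamma$ with $\beta>\alpha_g$, $\gamma>\alpha_h$, $\beta+\gamma=\alpha_g+\alpha_h+k$ — hence $\Phi_k$ depends only on the previously determined terms. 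The main step is to show this equation is solvable for the pair $(g_{\alpha_g+k},h_{\alpha_h+k})$ in the appropriate weight-graded pieces. Dividing out the common monomial prefactor, this reduces to a Bézout-type identity: I must solve $\sigma_1(\l)\,B(\l)+\sigma_2(\l)\,A(\l)=\Psi(\l)$ in the one-variable ring of (the relevant truncation of) polynomials in $\l=x^ay^b$, which is exactly where $\gcd(\sigma_1,\sigma_2)=1$ is used — it guarantees $1\in(\sigma_1,\sigma_2)$ and hence solvability, with the degree bookkeeping arranged so the solution lands in the correct graded component and does not spoil the Newton polygons $\D_g$, $\D_h$.

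The hard part is precisely this solvability step: one must check that the weight-graded piece in which $g_{\alpha_g+k}$ and $h_{\alpha_h+k}$ are allowed to live is large enough to absorb $\Phi_k$ via the Bézout identity — i.e., that the ``complementary degree'' available on each side of the edge matches the degrees produced by $\sigma_1,\sigma_2$, so that the process stays within admissible polygons and never requires a term below the prescribed leading weight. This is a finite linear-algebra verification at each level $k$, uniform in $k$, and the coprimality $\gcd(\sigma_1,\sigma_2)=1$ is exactly the nondegeneracy that makes the corresponding linear map surjective (cf.\ the non-vanishing of corner coefficients in Proposition~\ref{prop:minksum} and the additivity $\sigma_{f_1f_2}=\sigma_{f_1}\sigma_{f_2}$). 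Once all $g_\alpha,h_\alpha$ are produced inductively, the formal series $g,h$ are well-defined (only finitely many pairs contribute to each weight), $f=gh$ holds termwise in the grading, and by construction $\sigma_g=\sigma_1$, $\sigma_h=\sigma_2$, completing the proof; Theorem~\ref{thm:locA} then follows by iterating this splitting until each factor has a single characteristic number, or alternatively is proved first as the special case of splitting off one slope at a time.
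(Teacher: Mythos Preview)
Your approach is not the paper's. In the paper Theorem~\ref{thm:locB} carries a \qed\ at the end of its \emph{statement}: it is quoted as a known fact from the local theory of plane curve singularities, with \cite{wall} as the reference, and the subsequent paragraph ``About the proofs'' sketches the standard argument via blow-ups \eqref{bup} and the Weierstrass preparation theorem. No Hensel lifting or homological equation is used to establish Theorem~\ref{thm:locB} itself.

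More to the point, your proposal runs the paper's logic \emph{backwards}. The paper's flow is: take Theorems~\ref{thm:locA}--\ref{thm:locB} as input, deduce the pseudopolynomial factorizations (Theorems~\ref{thm:ppA}--\ref{thm:ppB}), and then use the \emph{existence} of factorization to conclude, a posteriori, that the homological operators $\H_\gamma$ are surjective (Theorems~\ref{thm:homA}--\ref{thm:homB}). You are instead trying to prove surjectivity of $\H_\gamma$ directly from B\'ezout/Sylvester and then deduce the factorization. The paper explicitly flags this route as problematic in \secref{sec:solvability} (the paragraph on the Sylvester map): the dimensions $\dim\Cs_\alpha(\D)$ vary irregularly with $\alpha$, so the reduction to a clean B\'ezout identity in $\C[\l]$ is not automatic. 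Your step ``the degree bookkeeping arranged so the solution lands in the correct graded component'' is precisely the point that is not routine, and you do not carry it out; the monomial prefactor $x^{i_0}y^{j_0}$ you ``distribute arbitrarily'' also affects whether $\Phi_k$, after dividing out, is a polynomial in $\l=x^ay^b$ at all (for small $k$ it need not be). So there is a genuine gap at your ``hard part'': the surjectivity you need is exactly what the paper declines to verify directly and instead obtains as a corollary of the factorization theorem you are trying to prove.
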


\begin{Cor}
Any single-slope series can be factored out as a product of terms each having a single characteristic number, eventually with nontrivial multiplicity. \qed
\end{Cor}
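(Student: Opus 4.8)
The plan is to derive this Corollary directly from Theorem~\ref{thm:locB} by induction on the number of \emph{distinct} characteristic numbers. Given a single-slope series $f\in\C[[x,y]]$, I would first factor its characteristic polynomial over $\C$ as $\sigma_f(\l)=c\prod_{s=1}^r(\l-\mu_s)^{k_s}$, where $\mu_1,\dots,\mu_r$ are pairwise distinct and nonzero (recall that characteristic numbers are nonzero by definition) and $c\ne0$, and then induct on $r$.

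The base case $r=1$ is immediate: $f$ itself is already a single-slope series whose only characteristic number is $\mu_1$, of multiplicity $k_1$. For the inductive step $r\ge2$, I would split $\sigma_f=\sigma_1\sigma_2$ with $\sigma_1=(\l-\mu_1)^{k_1}$ and $\sigma_2=c\prod_{s\ge2}(\l-\mu_s)^{k_s}$; these are coprime because $\mu_1\ne\mu_s$ for $s\ge2$. Theorem~\ref{thm:locB} then produces a factorization $f=f_1f_2$ with $\sigma_{f_i}=\sigma_i$ (up to nonzero constants, harmlessly absorbed). Before recursing I must check that $f_1$ and $f_2$ are again single-slope of the \emph{same} slope as $f$: this is forced by Proposition~\ref{prop:minksum}, since $\D_f=\D_{f_1}+\D_{f_2}$, and the Minkowski summands of a polygon with a single non-vertical edge must each have a single edge of that same slope — both summands being nontrivial because $\sigma_{f_1},\sigma_{f_2}$ are nonconstant. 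At this point $f_1$ has the single characteristic number $\mu_1$, while $f_2$ is single-slope with only the $r-1$ distinct characteristic numbers $\mu_2,\dots,\mu_r$; applying the induction hypothesis to $f_2$ and concatenating the resulting factorization with $f_1$ completes the argument.

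I do not expect any real obstacle: the entire content of the Corollary already resides in Theorem~\ref{thm:locB}, and the proof merely iterates it, using the multiplicativity of Newton diagrams (Proposition~\ref{prop:minksum}) to keep the factors single-slope. The only point requiring a word of care is precisely that last bookkeeping step that the slope is preserved under the splitting. (The same scheme, with Proposition~\ref{prop:minksum} and Theorem~\ref{thm:locB} replaced by their pseudopolynomial — and ultimately differential-operator — analogues, is exactly the template that will be reused later for the noncommutative factorization.)
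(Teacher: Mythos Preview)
Your proposal is correct and is exactly the argument the paper's bare \qed is inviting the reader to supply: iterate Theorem~\ref{thm:locB} by splitting off one distinct root at a time, with the single-slope property of the factors following from the multiplicativity of Newton polygons. The only cosmetic point is that Proposition~\ref{prop:minksum} is stated for pseudopolynomials in $\^\Cs$, whereas here you are in $\C[[x,y]]$; you should either invoke the classical analogue for formal series (cf.~Remark~\ref{rem:local} and \cite{wall}) or pass through the substitution $f(x,y)=y^nP(x,1/y)$, but this changes nothing in the argument.
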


These theorems immediately imply the following two factorization results for pseudopolynomials.

\begin{Thm}\label{thm:ppA}
Any pseudopolynomial $P\in\^\Cs$ admits factorization into single-slope terms $P=P_1\cdots P_m$.
\end{Thm}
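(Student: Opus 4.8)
The plan is to deduce Theorem~\ref{thm:ppA} directly from Theorem~\ref{thm:locA} via the correspondence described in Remark~\ref{rem:local}. First I would make that correspondence fully explicit: given $P\in\^\Cs$ of degree $n=\deg P$, set $f(x,y)=y^n P(x,\tfrac1y)\in\C[[x,y]]$. The boundedness $\deg p_j\le n$ is exactly what guarantees that $f$ involves no negative powers of $y$, so $f$ is a genuine element of $\C[[x,y]]$; moreover $f(0,0)=0$ is automatic (since $\D_P$ contains the origin and $\chi_P(0)=0$, the monomial $y^n$ does occur, so $f$ vanishes at the origin — actually I should be careful: the claim I need is that the map $P\mapsto f$ sends products to products, which I verify below). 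The key algebraic fact is multiplicativity: if $P=QR$ with $\deg Q=k$, $\deg R=n-k$, then $y^n P(x,\tfrac1y)=\bigl(y^k Q(x,\tfrac1y)\bigr)\bigl(y^{n-k}R(x,\tfrac1y)\bigr)$, i.e.\ the homogenization-in-$y$ operation is a monoid homomorphism from $(\^\Cs,\cdot)$ to $(\C[[x,y]],\cdot)$ respecting degrees. This is a one-line computation.

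Next I would translate the geometry. By Remark~\ref{rem:local}, the (classical, two-shift-invariant) Newton polygon $\D_f$ is obtained from $\D_P$ by reflecting in the horizontal axis and shifting up by $n$; concretely, the non-vertical boundary of $\D_f$ is the graph of $j\mapsto n - \chi_P^{-1}$-type data, and the upshot is a bijection between the edges of $\G_P$ and the edges of the Newton diagram of $f$, under which a $\^\Cs$-slope $\rho\in\Q_+$ corresponds to an $f$-slope $-1/\rho$ (or the appropriate reflected value), and in particular single-slope pseudopolynomials correspond exactly to single-slope series. Then I apply Theorem~\ref{thm:locA} to $f$ to get $f=f_1\cdots f_m$ with each $f_i$ single-slope. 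The one remaining point is that the factors $f_i$ can be chosen to \emph{come from pseudopolynomials}: a factor $f_i$ of $f$ in $\C[[x,y]]$ is (up to a unit in $\C[[x,y]]$, which I can absorb) of the form $y^{d_i}P_i(x,\tfrac1y)$ for a pseudopolynomial $P_i$ of degree $d_i$ precisely when $f_i$ contains a pure power of $y$ with nonzero coefficient, equivalently when $0$ lies in the projection of $\supp f_i$ to the first coordinate; since $f$ itself has this property and the Newton diagram of a product is the Minkowski sum, each $f_i$ inherits it. Unwinding the homogenization then yields $P=c\,P_1\cdots P_m$ with $c\in\C[[t]]$ a unit (which is harmless since pseudopolynomials are defined modulo units), each $P_i$ single-slope, establishing the theorem.

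The main obstacle I anticipate is not any deep mathematics but the bookkeeping around two mismatches in the dictionary of Remark~\ref{rem:local}: (i) in $\C[[x,y]]$ one works with \emph{convergent/formal germs up to units} and Weierstrass-preparation-type normalizations, whereas pseudopolynomials carry a distinguished "monic-ish" normalization via the degree and the requirement $b_j(0)\ne0$; one must check that the unit ambiguity introduced by lifting the $f_i$ back to $P_i$ is exactly the unit ambiguity already built into the definition of a pseudopolynomial, so nothing is lost. And (ii) the slope-inversion $\rho\leftrightarrow 1/\rho$ can collapse or create a slope equal to $0$ or $\infty$; one must confirm that the Fuchsian slope $\rho=0$ (Example~\ref{ex:FuchsianPP}) on the pseudopolynomial side is handled — it corresponds to the "vertical" part of $\D_f$ and contributes factors $f_i$ that are themselves $y$-regular, i.e.\ honest single-slope pseudopolynomials of $\PS=\{0\}$, so the statement remains uniform. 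Once these are nailed down the proof is essentially a transport of structure and occupies a few lines. I would also remark that Theorem~\ref{thm:locB} transports the same way and gives the refined version separating characteristic numbers, but that is the content of the next theorem in the paper (\ref{thm:ppB}-type statement) rather than of Theorem~\ref{thm:ppA}.

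\begin{proof}[Proof sketch for Theorem~\ref{thm:ppA}]
For $P\in\^\Cs$ of degree $n$ put $f(x,y)=y^nP(x,\tfrac1y)$; by the bound $\deg p_j\le n$ this lies in $\C[[x,y]]$, and the homogenization $P\mapsto f$ is multiplicative on degrees. Apply Theorem~\ref{thm:locA} to obtain $f=f_1\cdots f_m$ with each $f_i$ single-slope; since $\D_{f}$ (hence each $\D_{f_i}$, by the Minkowski-sum rule) meets the vertical axis, each $f_i$ contains a nonzero pure power of $y$ and therefore equals $y^{d_i}P_i(x,\tfrac1y)$ for a pseudopolynomial $P_i$ of degree $d_i$, up to a unit of $\C[[x,y]]$. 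Multiplying these identities and comparing with $f=y^nP(x,\tfrac1y)$ gives $P=c\,P_1\cdots P_m$ with $c\in\C[[t]]^{\times}$, which may be discarded as pseudopolynomials are defined modulo units. Finally, the edge correspondence of Remark~\ref{rem:local} identifies single-slope series with single-slope pseudopolynomials, so each $P_i$ is single-slope.
\end{proof}
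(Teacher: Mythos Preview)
Your approach is essentially identical to the paper's own proof: both pass from $P$ to $f(x,y)=y^nP(x,1/y)\in\C[[x,y]]$, invoke Theorem~\ref{thm:locA}, and then pull the factors back. The one place where the paper is more explicit is the step you phrase as ``up to a unit of $\C[[x,y]]$'': the paper names this as the (formal) Weierstrass preparation theorem, which is exactly the tool that converts each $f_i$ into a genuine polynomial in $y$ and forces the leftover unit to lie in $\C[[x]]$ rather than merely in $\C[[x,y]]^\times$. Your sentence ``since $\D_f$ \dots\ meets the vertical axis, each $f_i$ contains a nonzero pure power of $y$'' is the $y$-regularity hypothesis for Weierstrass, but note it is only valid once you have first stripped a power of $t$ from $P$ (equivalently a power of $x$ from $f$); otherwise, e.g.\ for $P=t\xi+t^3\xi^2$, the series $f=xy+x^3$ has no pure power of $y$. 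This is a harmless preliminary normalization and the paper's proof tacitly assumes it as well.
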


\begin{Thm}\label{thm:ppB}
Assume that the characteristic numbers of a single-slope pseudopolynomial $P\in\^\Cs$ form two disjoint groups so that $\sigma_P$ factors as $\sigma_P(\l)=\sigma_1(\l)\sigma_2(\l)$ with $\gcd(\sigma_1,\sigma_2)=1$.

Then $P$ admits factorization $P=P_1P_2$ so that $\sigma_{P_i}=\sigma_i$, $i=1,2$.
\end{Thm}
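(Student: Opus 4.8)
The plan is to deduce both Theorem~\ref{thm:ppA} and Theorem~\ref{thm:ppB} from their counterparts for formal series in two variables, Theorems~\ref{thm:locA} and~\ref{thm:locB}, via the correspondence already set up in Remark~\ref{rem:local}. Concretely, for a pseudopolynomial $P(t,\xi)$ of degree $n$ I would introduce the associated series $f(x,y)=y^nP(x,\tfrac1y)\in\C[[x,y]]$, which by the observation of Remark~\ref{rem:local} has no negative powers of $y$, and whose (classical) Newton polygon is obtained from $\D_P$ by a reflection in the horizontal axis followed by an upward shift by $n$. The first step is to check that this assignment $P\mapsto f$ is multiplicative up to the trivial bookkeeping of the degree: if $\deg P=n$ and $\deg Q=m$ then $\deg(PQ)=n+m$ by Proposition~\ref{prop:minksum}, and $y^{n+m}(PQ)(x,\tfrac1y)=\bigl(y^nP(x,\tfrac1y)\bigr)\bigl(y^mQ(x,\tfrac1y)\bigr)$, so the correspondence is a multiplicative bijection between pseudopolynomials of degree $n$ and series $f\in\C[[x,y]]$ with $y\nmid f$ and $\deg_y$-related support of width $n$.

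Next I would match the combinatorial data on the two sides. Under the reflection-and-shift, a single-slope pseudopolynomial (gap function $\chi_P(i)=\rho i$ on $[0,d]$, slope $\rho\in\Q_+$) corresponds exactly to a single-slope series in the sense of the paragraph before Theorem~\ref{thm:locA} (Newton diagram a single edge with negative slope $-\rho$), and conversely. Likewise the leading quasihomogeneous parts correspond: the weight function $\wgt_w(i,j)=j-wi$ on the pseudopolynomial side translates, under $i\mapsto i$, $j\mapsto n-j$, into the linear functional cutting out the relevant edge of $\D_f$, so the generating monomial $t^q\xi^p$ of weight zero goes to the generating monomial of the classical quasihomogeneous part, and hence the characteristic polynomials agree: $\sigma_P=\sigma_f$ up to the conventions of Definition~\ref{def:char-poly}. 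This is the content of the sentence ``exactly as in \secref{sec:qhg}'' already flagged in the excerpt, and $\sigma_{f_1f_2}=\sigma_{f_1}\sigma_{f_2}$ gives $\sigma_{P_1P_2}=\sigma_{P_1}\sigma_{P_2}$ on the other side.

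With these dictionaries in hand the two theorems are immediate. For Theorem~\ref{thm:ppA}: given $P$, pass to $f=y^nP(x,\tfrac1y)$, apply Theorem~\ref{thm:locA} to get $f=f_1\cdots f_m$ into single-slope factors; each $f_i$ still satisfies $y\nmid f_i$ (since $y\nmid f$ and $\C[[x,y]]$ is a domain), so it is of the form $y^{n_i}P_i(x,\tfrac1y)$ for a unique single-slope pseudopolynomial $P_i$ of degree $n_i$, and multiplicativity of the correspondence forces $P=P_1\cdots P_m$ with $n=\sum n_i$. For Theorem~\ref{thm:ppB}: $P$ single-slope with $\sigma_P=\sigma_1\sigma_2$, $\gcd(\sigma_1,\sigma_2)=1$, gives $f=y^nP(x,\tfrac1y)$ single-slope with $\sigma_f=\sigma_P=\sigma_1\sigma_2$; apply Theorem~\ref{thm:locB} to split $f=f_1f_2$ with $\sigma_{f_i}=\sigma_i$, then translate back to $P=P_1P_2$ with $\sigma_{P_i}=\sigma_i$, noting that $P_1,P_2$ are automatically single-slope because $f_1,f_2$ are (a disjoint nonzero root set still lies on one edge). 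The one genuine point to be careful about — and the place where I expect the only real work — is the compatibility of the two ``characteristic number'' conventions: in $\C[[x,y]]$ the weights of $x,y$ are natural numbers while in $\^\Cs$ the weight of $\xi$ is negative, so one must verify that the reflection $j\mapsto n-j$ does not distort the normalization ``polynomial without zero roots'' used to define $\sigma$, and in particular that factoring out the monomial prefactor $ct^j\xi^i$ on one side corresponds to factoring out a monomial on the other. Once that identification is pinned down, everything else is the formal transport of structure described above; I would state it as a short lemma (multiplicative bijection $P\leftrightarrow f$ preserving single-slope-ness and $\sigma$) and let Theorems~\ref{thm:ppA} and~\ref{thm:ppB} follow in two lines each.
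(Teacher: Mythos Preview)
Your overall strategy---pass to $f(x,y)=y^nP(x,1/y)\in\C[[x,y]]$, apply Theorems~\ref{thm:locA} and~\ref{thm:locB} there, then translate back---is exactly the paper's approach. But there is a genuine gap in the translation step. You write that each factor $f_i$ satisfies $y\nmid f_i$ and ``so it is of the form $y^{n_i}P_i(x,\tfrac1y)$ for a unique single-slope pseudopolynomial $P_i$.'' This inference is false: the condition $y\nmid f_i$ says only that $f_i$ has a nonzero term with $y$-exponent $0$; it does \emph{not} force $f_i$ to be a polynomial in $y$. For instance $1+xy+x^2y^2+\cdots\in\C[[x,y]]$ is not divisible by $y$ yet has unbounded $y$-degree, so it cannot be written as $y^m Q(x,\tfrac1y)$ for any pseudopolynomial $Q$. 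Theorems~\ref{thm:locA} and~\ref{thm:locB} produce factors in $\C[[x,y]]$ with no control on the $y$-degree, and a single-slope Newton diagram constrains only the \emph{boundary} of the support, not its extent in the $y$-direction.

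The missing ingredient is the (formal) Weierstrass preparation theorem: since $f$ is regular in $y$ of order $n$ (equivalently $y\nmid f$ and $\deg_y f=n$), each factor $f_i$ can be replaced by a Weierstrass polynomial in $y$ of some degree $n_i$, at the cost of an invertible unit in $\C[[x,y]]$; the units collect into a single invertible series which, being a factor of a polynomial of degree $n=\sum n_i$ in $y$, must itself lie in $\C[[x]]$. Only after this normalization does the inverse substitution $P_i(t,\xi)=\xi^{n_i}f_i(t,1/\xi)$ land back in $\^\Cs$. This is precisely how the paper closes the loop, and it is the one nontrivial step you should add to your argument; once it is in place, the rest of your dictionary (single-slope $\leftrightarrow$ single-slope, $\sigma_P=\sigma_f$) goes through as you describe.
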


\begin{Cor}
Any single-slope series can be factored out as a product of terms each having a single characteristic number, eventually with nontrivial multiplicity.
\end{Cor}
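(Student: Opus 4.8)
The plan is a simple induction on the number of distinct characteristic numbers, using Theorem~\ref{thm:ppB} as the inductive step. (I read the statement as referring to a single-slope \emph{pseudopolynomial} $P\in\^\Cs$, parallel to the preceding theorem.) Let $P$ be single-slope with slope $\rho=p/q$, $\gcd(p,q)=1$, and let $\sigma_P\in\C[\l]$ be its characteristic polynomial; by the normalization of \secref{sec:qhg} all roots of $\sigma_P$ are nonzero. Factoring $\sigma_P$ over $\C$ and collecting equal roots,
$$
\sigma_P(\l)=c\prod_{s=1}^k(\l-\l_s)^{m_s},\qquad \l_1,\dots,\l_k\ \text{pairwise distinct},\ c\ne0 .
$$
If $k\le1$ there is nothing to prove. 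If $k\ge2$, put $\sigma_1(\l)=(\l-\l_1)^{m_1}$ and $\sigma_2(\l)=\sigma_P(\l)/\sigma_1(\l)$; since $\l_1$ is not a root of $\sigma_2$ we have $\gcd(\sigma_1,\sigma_2)=1$, so Theorem~\ref{thm:ppB} produces a factorization $P=P_1R$ with $\sigma_{P_1}=\sigma_1$ and $\sigma_R=\sigma_2$.

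Next I would check that both factors remain single-slope of the same slope $\rho$. By Proposition~\ref{prop:minksum}, $\D_P=\D_{P_1}+\D_R$; the non-vertical edge of the single-slope polygon $\D_P$ is the vector sum of the non-vertical edges of the two admissible summands, and these can sum to an edge of slope $\rho$ only if each is itself an edge of slope $\rho$. Hence $P_1$ and $R$ are single-slope with slope $\rho$, and $R$ carries precisely the characteristic numbers $\l_2,\dots,\l_k$ with multiplicities $m_2,\dots,m_k$. At this point one also uses the multiplicativity $\sigma_{P_1R}=\sigma_{P_1}\sigma_R$: the leading quasihomogeneous part of a product equals the product of the leading parts, because the corner coefficients of $\D_P=\D_{P_1}+\D_R$ cannot cancel, again by Proposition~\ref{prop:minksum}. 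Applying the inductive hypothesis to $R$ gives $R=P_2\cdots P_k$ with $\sigma_{P_s}=(\l-\l_s)^{m_s}$, hence $P=P_1P_2\cdots P_k$ with each $P_s$ single-slope and carrying the single characteristic number $\l_s$ of multiplicity $m_s$.

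I do not expect a genuine obstacle in this argument: the only points needing care are the bookkeeping that each extracted factor is single-slope with the slope of $P$, and the multiplicativity of the characteristic polynomial over products of single-slope pseudopolynomials. Both follow from the Minkowski-sum formula together with the non-vanishing of corner coefficients (Proposition~\ref{prop:minksum}), after which the corollary is an immediate iteration of Theorem~\ref{thm:ppB}.
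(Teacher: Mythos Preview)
Your proposal is correct and is exactly the argument the paper has in mind: the paper gives no separate proof of this Corollary (the ``Proof by reduction to the local case'' that follows it is actually the joint proof of Theorems~\ref{thm:ppA} and~\ref{thm:ppB}), and the Corollary is treated as the obvious iteration of Theorem~\ref{thm:ppB}, i.e., precisely your induction on the number of distinct characteristic roots. Your additional check that the factors stay single-slope via $\D_P=\D_{P_1}+\D_R$ and $\PS(P_1R)=\PS(P_1)\cup\PS(R)$ is a harmless bit of extra bookkeeping that the paper leaves implicit.
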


\begin{proof}[Proof by reduction to the local case]
For a pseudopolynomial $P(t,\xi)\in\^\Cs$ of degree $n$ denote $f(x,y)=y^n P(x,1/y)$. Then $f$ is a formal series in $x$ and a polynomial in $y$, that is, an element from $\C[[x,y]]$. Let $f=f_1f_2$ be the factorization of $f$ in the assumptions of Theorem~\ref{thm:locA} (Theorem~\ref{thm:locB} respectively). By the Weierstrass theorem (formal), one can assume that modulo an invertible series $f_i$ are polynomial in $y$ of degrees $n_1,n_2$ respectively, with $n_1+n_2=n$. The invertible series must also be polynomial in $y$ of degree $0$, that is, a formal series from $\C[[x]]$. Setting $P_i(t,\xi)=\xi^{n_i}f_i(t,1/\xi)$ gives the required factorization of $P$.
\end{proof}

\subsubsection{About the proofs}\begin{small}
The modern proof of Theorems~\ref{thm:ppA} and~~\ref{thm:ppB} relies on the desingularization, a sequence of rational monomial transformations which simplify the curve (or the formal series). These transformations (blow-ups) have the form
\begin{equation}\label{bup}
  (x,y)\longmapsto (x,y/x)\qquad\text{or}\qquad (x,y)\longmapsto (x/y,y)
\end{equation}
and act on the support of a series by an affine transformation which allows to extract factors of the form $x^p$ or $y^q$. For instance, if $\D_f$ has a single ``homogeneous edge'' connecting the vertices $(0,p)$ and $(p,0)$ and the corresponding characteristic numbers $\l_1,\dots,\l_p$ are pairwise different, then after a single blow-up one can refer to the implicit function theorem for the proof that $f$ admits factorization into terms corresponding to nonsingular branches,
\begin{equation*}
  f(x,y)=\prod_{i=1}^p (x-\boldsymbol \l_i(x)y),\qquad \boldsymbol \l_i\in\C[[x]],\quad \boldsymbol \l_i(0)=\l_i.
\end{equation*}
In case of multiple characteristic values one has to refer to the Weierstrass Preparation theorem instead of the implicit function theorem.

A single-slope series with the single edge connecting $(0,p)$ and $(q,0)$ requires several blow-ups whose number and types are determined by the Euclid algorithm for computation of $\gcd(p,q)$. A slightly more delicate considerations are required when $f$ has more than one slope, but the idea remains the same. \par\end{small}

\section{Homological equation and its solvability}

In this section we return to the algebra of pseudopolynomials $\^\Cs=\C[[t]][\xi]$ and attempt to construct factorization in this algebra directly, following Newton's ideas.

\subsection{Formal factorization}\label{sec:formfact}
Consider an admissible polygon $\D\in\R_+^2$ and the weight function $\wgt=\wgt_w\:\Z^2\to\Q$ associated with a rational weight $w\in\Q_+$. Denote
\begin{equation}\label{qhn}
  \Cs_\alpha(\D)=\Cs_\alpha\cap\{\supp P\subseteq\D\}.
\end{equation}

Then the property \eqref{gr-alg} can be refined as follows: for any two admissible polygons $\D',\D''\subseteq\R^2_+$ and any $\alpha,\beta\in\Q_+$,
\begin{equation}\label{gr-alg-D}
  \Cs_\alpha(\D')\cdot\Cs_\beta(\D'')\subseteq\Cs_{\alpha+_\beta}(\D'+\D'').
\end{equation}

Let $P\in\^\Cs$ be a pseudopolynomial expanded into $w$-quasihomogeneous terms as $P=\sum_\gamma P_\gamma$, and assume that $\D=\D_P=\D'+\D''$ is the admissible decomposition of its Newton polygon. The factorization under the form $P=QR$ can be achieved by two formal expansions $Q=\sum_\alpha Q_\alpha$, $R=\sum_\beta R_\beta$, if and only if
\begin{equation}\label{factor-comm}
  P_\gamma=\sum_{\alpha+\beta=\gamma}Q_\alpha R_\beta,\qquad Q_\alpha\in\Cs_\alpha(\D'),\ R_\beta\in\Cs_\beta(\D'').
\end{equation}
Denote the leading terms of the three pseudopolynomials by $P_*,Q_*,R_*$ respectively (of weights $\gamma_*=\min \wgt\big|_\D$, $\alpha_*=\min \wgt\big|_{\D'}$, $\beta_*=\min \wgt\big|_{\D''}$) and assume that
\begin{equation}\label{factor-seed}
  P_*=Q_*R_*\in\Cs_{\gamma_*}(\D).
\end{equation}

Then \eqref{factor-comm} becomes  is an infinite \emph{triangular} system of \emph{linear algebraic equations} with respect to the unknown terms $Q_\alpha,R_\beta$ from the corresponding finite-dimensional linear spaces $\Cs_\alpha(\D')$, $R_\beta\in\Cs_\beta(\D'')$.

Indeed, each equation can be rewritten as
\begin{equation}\label{homolog}
  Q^*R_{\gamma-\alpha_*}+Q_{\gamma-\beta_*}R^*=-P_\gamma+\sum_{\alpha>\alpha_*,\ \beta>\beta_*}Q_\alpha R_\beta.
\end{equation}
The condition on the weights in the right hand side means that it involves only the terms of the weights strictly less than $Q_{\gamma-\beta_*}$ (resp., $R_{\gamma-\alpha_*})$. If these terms were already determined recursively from the equations \eqref{homolog} solved for all smaller values of $\gamma$, then the right hand side is known and we can study its solvability the equation in the weight $\gamma$ as well. The equation \eqref{factor-seed} serves as a base for this inductive process.

Solvability of these equations depends on the following data: the weight $w$, the two admissible polygons $\D',\D''$ and the initial quasihomogeneous polynomials $Q_*,R_*$ of the appropriate weights. Denote by $\H$ the linear operator (more precisely, a family (sequence) of linear operators $\H_\gamma$, $\gamma\in\Q_+$)
\begin{equation}\label{hom-op}
  \H\:\Cs_{\gamma-\alpha_*}(\D'')\times \Cs_{\gamma-\beta_*}(\D')\to\Cs_\gamma(\D'+\D''),\qquad (U,V)\mapsto Q_*U+R_*V.
\end{equation}

\begin{Def}\label{def:homolog}
The equation(s) $\H(U,V)=W$ is called the \emph{homological equation} associated with the data $\mathscr H=(w,\D',\D'',Q_*,R_*)$. The homological equation is called \emph{solvable}, if each operator $\H_\gamma$ is \emph{surjective} for all $\gamma\ge\gamma_*=\alpha_*+\beta_*$.
\end{Def}

This equation can be considered as the linearization of the nonlinear equation $P=QR$ at the ``point'' $Q_*,R_*$ in the same way as it appears in the theory of local normal forms of vector fields etc., see \cite{thebook}*{\parasymbol 4}. Its solvability very strongly depends on the corresponding data $\mathscr H$, in particular, on the choice of the seed polynomials $Q_*,R_*$.

\subsection{Examples}
We start with the extreme case where $w=0$. It implies that the ``Fuchsian'' part of a pseudopolynomial can be always factored out.

\begin{Ex}\label{ex:horizontal}
Let $w=0$. Then $\wgt=\deg_t$, and the the quasihomogeneous components are of the form $P_j=t^j p_j(\xi)$, $j=0,1,2,\dots$. Let $d=\deg p_0<n=\deg P$. Since the Newton diagram of $P$ contains a nontrivial horizontal segment of length $d<n=\deg P$, then  $\D_P=\D'+\D''$, where $\D'$ is a vertical semistrip $[0,d]\times\R_+$. In other words, the gap function $\chi_{\D'}$ vanishes identically on $[0,d]$, and $\chi_{\D''}$ is strictly positive on $(0,n-d]$, i.e., the corresponding Newton diagram has only nonzero slopes. Let $Q_*=P_*=p_0(\xi)$, $R_*=1$. Substituting the expansions
$$
 Q=p_0(\xi)+\sum_{j=1}^\infty t^j q_j(\xi), \deg q_j\le d,\quad R=1+\sum_{j=1}^\infty t^j r_j(\xi),\ \deg r_j\le n-d
$$
into \eqref{homolog}, we obtain an infinite series of identities in $\C[\xi]$,
\begin{equation}\label{fuchs-out}
\begin{aligned}
 p_0&=p_0,\\
 p_1&=q_1+r_1p_0,\\
 p_2&=q_2+r_1q_1+r_2p_0,\\
 \dots&{\makebox[0.3\columnwidth]{\dotfill}}\\
 p_j&=q_j+r_1q_{j-1}+\cdots+r_j p_0,
\end{aligned}
\end{equation}
The initial identity is trivially satisfied. The requirements that the support of $Q_\beta$ belongs to $\D'$ means that $\deg q_j\le d$ (then the second requirement will be automatically satisfied).

The homological equation \eqref{fuchs-out} can be inductively solved with respect to $q_j,r_j$ by the division with remainder of the polynomial $p_j-\sum_{k=1}^{j-1}r_kq_{j-k}$ by $p_0$. The remainder term $q_j$ can be guaranteed to be of degree $\le d-1$ (and then it will be uniquely determined), while $r_j$ will be the respective incomplete ratio. This gives a direct proof of a particular case of Theorem~\ref{thm:ppA}.
\end{Ex}

\subsection{Sylvester map}
As was explained in \secref{sec:qhg}, quasihomogeneous polynomials can be expressed as univariate polynomials in the basic monomial of weight zero. An analog of the homological equation \eqref{homolog} for univariate polynomials looks as follows.

Denote by $\C_n[\l]$ the linear space of polynomials of degree $\le n-1$, so that $\dim_\C\C_n[\l]=n$, and assume $q_*\in\C_n[\l]$, $r_*\in\C_m[\l]$. Then there is a linear map, called the \emph{Sylvester map},
\begin{equation}\label{sylv}
  \boldsymbol S\:\C_m[\l]\times\C_n[\l]\to \C_{m+n}[\l], \qquad (u,v)\longmapsto q_*u+r_*v
\end{equation}
(the matrix of this map in the natural basis is the Sylvester matrix of the two polynomials $p,q$). It is well known that the Sylvester map is bijective if and only if $\gcd(p,q)=1$.

However, it is very difficult to apply this result to study the homological equation \eqref{homolog}: the dimensions $\dim\Cs_\alpha(\D)$ depend on the weight $\alpha$ in a rather irregular way. In general, the homological operator $\H_\gamma$ acts between spaces of different dimensions. Thus proving directly its surjectivity is problematic. However, it follows indirectly from the established in \secref{sec:comm-factorization} results on factorization of pseudopolynomials.

\subsection{Solvability of the homological equation}\label{sec:solvability}
Let $\mathscr H=(w,\D',\D'',Q_*,R_*)$ be the data defining the homological operator $\H$.

Consider first the case where one of the polygons, say, $\D'$ is single-slope, $\PS(\D')=\{\rho\}$, and choose the weight $w=\rho$. Then $\alpha_*=0$, and $Q_*$ is a quasihomogeneous polynomial of weight zero with nonzero characteristic roots. If $\rho\notin \PS(\D'')$, then $\beta_*<0$, the weight achieves its minimum at a corner point and the leading term $R_*$ is a (nontrivial) monomial.

\begin{Thm}\label{thm:homA}
If $w=\rho\notin \PS(\D'')$, i.e., the polygons $\D',\D''$ have no common slope, then all homological operators are surjective and the homological equation is solvable in any weight $\gamma$.
\end{Thm}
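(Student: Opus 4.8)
\emph{The mechanism.} Two features of the data $\mathscr H=(w,\D',\D'',Q_*,R_*)$ of Definition~\ref{def:homolog} drive the proof. Since $w=\rho$ and $\D'$ is single-slope $\rho$, the only non-vertical edge of $\D'$ lies on the weight-zero line, so $\alpha_*=0$ and $Q_*$ is a weight-zero quasihomogeneous polynomial; writing $\tau$ for the basic weight-zero monomial of \secref{sec:qhg}, the hypothesis that all characteristic numbers are nonzero says exactly that $Q_*=\sigma_{Q_*}(\tau)$ with $\sigma_{Q_*}(0)\ne0$, so $Q_*$ is divisible by neither $t$ nor $\xi$. Since $\rho\notin\PS(\D'')$, the convex piecewise-linear function $i\mapsto\chi_{\D''}(i)-\rho i$ has no segment of slope $0$ and hence attains its minimum $\beta_*$ at a single vertex $(i_0,j_0)$ of $\D''$; consequently $R_*=c\,t^{j_0}\xi^{i_0}$ is a \emph{single monomial} with $c\ne0$, in particular coprime to $Q_*$.

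\emph{Reduction to a one-dimensional division.} I would solve the triangular system \eqref{homolog} weight by weight after passing to the univariate picture. A $\rho$-quasihomogeneous pseudopolynomial of weight $\gamma$ whose support lies in a prescribed admissible polygon is, up to a single reference monomial of weight $\gamma$, a polynomial in $\tau$ confined to an interval of exponents read off the polygon; for the single-slope $\D'$ that interval has the maximal available length. Fixing these reference monomials compatibly on $\D'$, $\D''$ and $\D=\D'+\D''$ --- here one uses that the leftmost lattice point on the weight-$\gamma$ line of $\D''$ has $\xi$-coordinate $\le i_0$, so that $R_*$ divides every monomial appearing on the right of \eqref{hom-op} --- the operator $\H_\gamma$ becomes (after dividing out that common monomial) a map of the shape $(u,v)\mapsto\sigma_{Q_*}(\tau)\,u(\tau)+v(\tau)$, where $u$ runs over $\tau$-polynomials supported in an interval $I''$ coming from $\D''$, $v$ over those supported in an interval $I'$ coming from $\D'$, and the target $w$ over those supported in an interval $I$ coming from $\D$. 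Surjectivity of $\H_\gamma$ is now the statement that every such $w$ equals $\sigma_{Q_*}u+v$ for some admissible $u,v$.

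\emph{Solving, and the obstacle.} Given $w$, put $v:=w-\sigma_{Q_*}u$ and choose $u$ so that $v$ lands inside $I'$. Because $\sigma_{Q_*}(0)\ne0$, the inverse series $\sigma_{Q_*}^{-1}\in\C[[\tau]]$ exists, and one fixes the coefficients of $u$ in increasing degree to cancel the coefficients of $w$ sitting to the \emph{left} of $I'$; because $\sigma_{Q_*}$ has nonzero top coefficient, one fixes the remaining coefficients of $u$ in decreasing degree --- Euclidean division by $\sigma_{Q_*}$ --- to cancel the coefficients of $w$ sitting to the \emph{right} of $I'$; what survives is then $v$, supported in $I'$. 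The single thing that must be checked --- and this is the main obstacle --- is that these two one-sided determinations of $u$ are consistent: that the interval $I''$ is wide enough and placed so that the ``low'' and ``high'' portions of $u$ neither overlap nor exhaust $I''$. Equivalently one needs the numerical identity $\dim\Cs_\gamma(\D)=\dim\Cs_\gamma(\D'')+\dim\Cs_{\gamma-\beta_*}(\D')-\dim\Ker\H_\gamma$ with $\dim\Ker\H_\gamma\le1$ (the kernel, when nonzero, being spanned by a pair $(R_*\cdot\text{monomial},\,-Q_*\cdot\text{monomial})$); this is a purely combinatorial assertion about how supports add in $\D=\D'+\D''$, and it is exactly where the hypothesis $\rho\notin\PS(\D'')$ enters, since it is what keeps the summand coming from $R_*$ a clean interval rather than a genuine convolution.

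\emph{The indirect route.} As the text suggests, one can bypass the univariate reduction. The homological equation for $\mathscr H$ is solvable if and only if every $P\in\^\Cs$ with $\supp P\subseteq\D$ and $w$-leading term $Q_*R_*$ admits a factorization $P=QR$ with $\supp Q\subseteq\D'$, $\supp R\subseteq\D''$ and leading terms $Q_*,R_*$; the forward implication is the triangular recursion of \secref{sec:formfact} begun from \eqref{factor-seed}. Such factorizations are produced by Theorem~\ref{thm:ppA} (and Theorem~\ref{thm:ppB}) applied to the slope $\rho$, which is isolated in $\PS(\D)=\{\rho\}\cup\PS(\D'')$: Proposition~\ref{prop:minksum} together with the cancellativity of the Minkowski sum forces the slope-$\rho$ factor to have Newton polygon $\D'$ and the complementary factor to have Newton polygon $\D''$, and a rescaling by reciprocal constants fixes the two leading terms to be $Q_*$ and $R_*$. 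In that presentation the delicate step is the reverse implication --- extracting surjectivity of each $\H_\gamma$ from the bare existence of factorizations, which must contend with the kernels of the $\H_\gamma$ and the irregular jumps of $\dim\Cs_\gamma(\D')$, $\dim\Cs_\gamma(\D'')$ and $\dim\Cs_\gamma(\D)$ --- so that in either presentation the heart of the matter is the same support-addition combinatorics under $\D=\D'+\D''$.
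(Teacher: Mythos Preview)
Your final paragraph \emph{is} the paper's proof: it deduces surjectivity of each $\H_\gamma$ from Theorem~\ref{thm:ppA} in three sentences, by noting that $P_\gamma$ can be varied freely (it appears only additively in the weight-$\gamma$ equation of \eqref{homolog}) while a factorization continues to exist, so the right-hand side sweeps out all of $\Cs_\gamma(\D)$. You are right that there is a wrinkle --- the factorization supplied by Theorem~\ref{thm:ppA} for different values of $P_\gamma$ need not a priori share the same lower-weight terms, so $S_\gamma$ might shift --- but the paper does not treat this as requiring the kernel-and-dimension bookkeeping you anticipate. Any factorization procedure that processes weights in increasing order (Newton--Puiseux, or the Weierstrass-plus-blowup scheme behind Theorem~\ref{thm:locA}) determines the lower-weight factors from $P_{<\gamma}$ alone; with that reading, $S_\gamma$ is fixed once $P_{<\gamma}$ is, and $P_\gamma-S_\gamma$ ranges over everything. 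So the ``delicate step'' you flag is dispatched implicitly, not through support combinatorics.

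Your first three paragraphs outline a genuinely different, self-contained route: extend Example~\ref{ex:horizontal} from slope $0$ to an arbitrary rational slope $\rho$ by reducing $\H_\gamma$ to a two-sided division by $\sigma_{Q_*}$ in $\C[\tau]$. This avoids invoking Theorem~\ref{thm:ppA} altogether, at the price of the interval-matching combinatorics you call ``the obstacle'' --- checking that the low-degree and high-degree determinations of $u$ fit together inside the window $I''$. The paper deliberately makes the opposite trade: it imports the factorization theory of $\C[[x,y]]$ wholesale (Theorems~\ref{thm:locA}--\ref{thm:ppB}) precisely so that Theorems~\ref{thm:homA}--\ref{thm:homB} become immediate corollaries and the lattice-point counting never has to be done. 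Your direct approach would work and is more elementary; the paper's is shorter but leans on heavier input.
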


The second case deals with factorization of the quasihomgeneous polynomials into terms of lower weight. Assume that $\PS(\D')=\PS(\D'')=\{\rho\}$ and the weight is chosen accordingly, $w=\rho$. Then $\alpha_*=\beta_*=0$, and the corresponding characteristic $\sigma'=\sigma_{Q_*}$ and $\sigma''=\sigma_{R_*}$ are defined as in Definition~\ref{def:char-poly}.

\begin{Thm}\label{thm:homB}
If $\gcd(\sigma',\sigma'')=1$, i.e., the two characteristic polynomials have no common roots, then all homological operators are surjective and the homological equation is solvable in any weight $\gamma$.
\end{Thm}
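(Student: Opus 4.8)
The plan is to reduce the surjectivity of each $\H_\gamma$ to the bijectivity of the classical Sylvester map, using the weight-zero normalization. Since $\PS(\D')=\PS(\D'')=\{\rho\}$ and $w=\rho$, both leading terms are quasihomogeneous of weight zero, hence $Q_*=\sigma'(m_0)$ and $R_*=\sigma''(m_0)$ where $m_0=t^q\xi^p$ is the generating monomial of weight zero (writing $\rho=p/q$ in lowest terms). The weight grading on $\^\Cs$ then refines inside any fixed residue class modulo $\rho$: a weight-$\gamma$ quasihomogeneous pseudopolynomial with support in $\D$ is, after dividing by a fixed monomial $t^j\xi^i$ of weight $\gamma$ realizing the minimal corner, a polynomial in $m_0$ whose degree is bounded by a quantity depending on $\gamma$ and on the gap function $\chi_\D$. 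Thus each space $\Cs_\gamma(\D)$ is identified with a space $\C_{N(\gamma)}[\l]$ of univariate polynomials, and analogously for $\D',\D''$; under these identifications $\H_\gamma$ becomes exactly the Sylvester-type map $(u,v)\mapsto \sigma' u+\sigma'' v$ restricted to the appropriate dimension-truncations.

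First I would pin down the combinatorics of the dimensions. The key identity to verify is $\dim\Cs_{\gamma-\alpha_*}(\D'')+\dim\Cs_{\gamma-\beta_*}(\D')=\dim\Cs_\gamma(\D'+\D'')$ for every $\gamma\ge\gamma_*$; since $\alpha_*=\beta_*=0$ here this reads $N''(\gamma)+N'(\gamma)=N(\gamma)$, which should follow from $\chi_{\D}=\chi_{\D'}+\chi_{\D''}$ translated into the single-slope setting, i.e., from additivity of the lengths of the lattice intervals cut out at each height. Granting this, both sides of $\H_\gamma$ have equal dimension, so surjectivity is equivalent to injectivity, and injectivity of $(u,v)\mapsto\sigma' u+\sigma'' v$ on polynomials of bounded degree is immediate from $\gcd(\sigma',\sigma'')=1$: if $\sigma' u=-\sigma'' v$ then $\sigma''\mid u$, forcing $u=v=0$ once the degrees are below $\deg\sigma''$, $\deg\sigma'$ respectively. (One must check the truncation is compatible: the relevant degree bounds $N',N''$ coming from $\D',\D''$ are at most $\deg\sigma'$, $\deg\sigma''$, which is precisely the single-slope/admissibility constraint.)

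Alternatively — and this is the route the paper seems to be steering toward — one can bypass the dimension bookkeeping entirely and deduce Theorem~\ref{thm:homB} from Theorem~\ref{thm:ppB}: the latter already guarantees a factorization $P=P_1P_2$ with $\sigma_{P_i}=\sigma_i$, i.e., the nonlinear system \eqref{factor-comm}–\eqref{homolog} \emph{does} have a solution for a well-chosen right-hand side, which forces each $\H_\gamma$ to be surjective (a triangular system all of whose instances are solvable for arbitrary data cannot have a non-surjective operator at any stage, provided one can prescribe $P_\gamma$ freely — and $P_\gamma$ ranges over all of $\Cs_\gamma(\D)$). I would present the direct Sylvester argument as the main line, since it is self-contained and mirrors Example~\ref{ex:horizontal}, and mention the reduction to Theorem~\ref{thm:ppB} as the conceptual shortcut. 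The main obstacle is the first route's bookkeeping: making the identification $\Cs_\gamma(\D)\cong\C_{N(\gamma)}[\l]$ precise when $\rho=p/q$ is not an integer — one must be careful that only heights $\gamma$ in the correct residue class mod $\gcd$-normalization carry nonzero spaces, and that the truncation degrees match on both sides of $\H_\gamma$ uniformly in $\gamma$; everything else is the classical Sylvester fact plus the additivity $\chi_\D=\chi_{\D'}+\chi_{\D''}$ already recorded in Proposition~\ref{prop:minksum}.
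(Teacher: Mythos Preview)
Your ``alternative route'' is precisely the paper's proof: the authors deduce surjectivity of every $\H_\gamma$ indirectly from Theorem~\ref{thm:ppB}, by noting that the factorization $P=(Q_*+\cdots)(R_*+\cdots)$ exists \emph{regardless} of the higher-weight terms $P_\gamma$, so that the homological equation $\H_\gamma(U,V)=W$ must be solvable for arbitrary $W\in\Cs_\gamma(\D)$. That part of your proposal is correct and matches the paper.

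Your proposed main line, however, contains a genuine error. The dimension identity
\[
\dim\Cs_\gamma(\D')+\dim\Cs_\gamma(\D'')=\dim\Cs_\gamma(\D'+\D'')
\]
is \emph{false}. Take the simplest case $\rho=1$, $\deg\sigma'=\deg\sigma''=1$: then for every integer $\gamma\ge 0$ one has $\dim\Cs_\gamma(\D')=\dim\Cs_\gamma(\D'')=2$ while $\dim\Cs_\gamma(\D)=3$, so the domain of $\H_\gamma$ has dimension $4$ and the target dimension $3$. Surjectivity is therefore \emph{not} equivalent to injectivity, and your argument ``surjectivity is equivalent to injectivity, and injectivity \dots\ is immediate from $\gcd(\sigma',\sigma'')=1$'' breaks down. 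The paper in fact warns of exactly this obstruction in the paragraph following \eqref{sylv}: ``the homological operator $\H_\gamma$ acts between spaces of different dimensions. Thus proving directly its surjectivity is problematic.''

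The Sylvester route can be repaired, but not by a dimension count: one must prove directly that the ``extended'' Sylvester map $(u,v)\mapsto \sigma' u+\sigma'' v$ from $\C_{N-n'}[\l]\times\C_{N-n''}[\l]$ onto $\C_N[\l]$ (with $N\ge n'+n''$) is surjective whenever $\gcd(\sigma',\sigma'')=1$. This is true (divide $w$ by $\sigma'$ with remainder, then apply the classical Sylvester bijection to the remainder), and would give a self-contained alternative to the paper's indirect argument. If you want to keep the direct approach, replace the dimension-equality claim by this explicit surjectivity lemma; otherwise, promote your alternative route to the main proof, as the paper does.
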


\begin{proof}[Proof of both theorems]
Consider a pseudopolynomial $P$ with the leading part $P_*=Q_*R_*$ with $Q_*,R_*$ as in, say, Theorem~\ref{thm:homA}. By Theorem~\ref{thm:ppA}, $P$ admits factorization of the form $P=(Q_*+\cdots)(R_*+\dots)$ \emph{regardless} of the higher terms of $P$.

Substituting this factorization, we see that for each weight $\gamma>\gamma_*$ the homological equation \eqref{homolog} admits a solution for \emph{some} right hand side. Yet since the term $P_\gamma$ can be changed \emph{arbitrarily} without affecting reducibility, we conclude that the equation $\H_\gamma(U,V)=W$, see \eqref{hom-op}, is solvable for \emph{any} $W$.

In exactly the same way Theorem~\ref{thm:homB} follows from Theorem~\ref{thm:ppA}.
\end{proof}

\begin{Rem}\label{rem:convergence}
Theorems~\ref{thm:ppA} and~\ref{thm:ppB} describe factorization of the pseudopolynomials both in the formal context (as stated) and in the analytic context. Consider the commutative algebra $\Cs=\C[\xi]\otimes_\C\mathscr O(t)$, cf.~with \eqref{comm-alg}, where $\mathscr O(t)$ is the algebra of germs of analytic functions at $(\C,0)$ which can be identified with the algebra of convergent Taylor series, the corresponding objects are called \emph{analytic pseudopolynomials}. Then each analytic pseudopolynomial $P\in\Cs$ can be factored as a product if two analytic pseudopolynomials $Q,R\in\Cs$. Moreover, among the (many) formal solutions $(Q,R)$ constructed using the homological equations, one can always find a convergent solution.
\end{Rem}

\section{Weyl algebra and factorization of differential operators}

\subsection{Weyl algebra}
Motivated by the arguments from \secref{sec:operators} on various representations of linear ordinary differential operators, we introduce the (formal) Weyl algebra $\W$ as the algebra of formal series\footnote{The classical Weyl algebra is \emph{generated} by two symbols with the same commutation relation, so consists of noncommutative \emph{polynomials} in these variables.} in the two non-commutative variables $t,\eu$ related by the commutation identity \eqref{comm}, which are in fact \emph{polynomials} in $\eu$.

Using the commutation rule, any element $L\in\W$ can be reduced to the infinite formal sum
\begin{equation}\label{formal-oper}
  L=L(t,\eu)=\sum_{(i,j)\in S}c_{ij} t^j\eu^i,\qquad S=\supp L\subset [0,\dots,n]\times\Z_+,\ c_{ij}\in\C\ssm\{0\},
\end{equation}
where all powers of $t$ always occur to the left from powers of $\eu$ (the canonical representation). The integer $n=\ord L$ is the order of the operator $L$, and $S$ is called its support, $S=\supp (L)$. The Newton diagram $\D_L$ is obtained from the support in exactly the same way as in the commutative case (convex hull and invariance by translations).

Because of the non-commutativity of $\W$, in general $\supp (LM)\not\subseteq \supp (L)+\supp (M)$. However, the identity \eqref{comm} implies that
\begin{equation}\label{e-decrease}
  t^j\eu^i\cdot t^{j'}\eu^{i'}=t^{j+j'}\eu^{i+i'}+\sum_{k<i+i'}c_{kl}\,t^l\eu^k.
\end{equation}
This together with Proposition~\ref{prop:left} proves that
\begin{equation}\label{minksum-W}
  \forall L,M\in\W\qquad \D_{LM}=\D_L+\D_M
\end{equation}
(cf.~with Proposition~\ref{prop:minksum}).

\begin{Def}
For any $L\in\W$ with the canonical representation \eqref{formal-oper} the pseudopolynomial $P=P(t,\xi)=\sum_{\supp L}c_{ij}t^j\xi^i$ with the same coefficients $c_{ij}$ will be called\footnote{The classical notion of the symbol of a differential operator collects only the terms involving the highest order derivatives.} the \emph{pseudosymbol} of $L$ and denoted by $\PP_L$.

Conversely, for a pseudopolynomial $P=P(t,\xi)=\sum c_{ij}t^j\xi^i \in\^\Cs$ we will denote $P(t,\eu)\in\W$ the result of substitution of $\eu$ instead of $\xi$, $L=\sum_{i,j}c_{ij}t^j\eu^i$.
\end{Def}

Needless to warn that the pseudosymbol is by no means functorial: in general $\PP_{LM}\ne\PP_L\PP_M$ and $P(t,\eu)Q(t,\eu)\ne PQ(t,\eu)$.

The correspondence $\W\to\^\Cs$, $L\mapsto\PP_L$ allows to associate with operators from $\W$ all notions that were introduced for the pseudopolynomials. Thus we define Fuchsian operators, single-slope operators, the Poincar\'e spectrum e.a. Obviously, the pseudosymbols of Fuchsian operators become what they should be.

\subsection{Filtration of the Weyl algebra}
Let $w\in\Q_+$ be a rational weight and $\wgt_w(\cdot)$ the corresponding weight function. However (unfortunately) since $\PP_{LM}\ne \PP_L\PP_M$, we do not have the grading of $\W$ by different weights, only \emph{filtration}.

Recall that each grading of an algebra, in particular, the grading $\Cs(\D)=\bigoplus_\alpha\Cs_\alpha(\D)$, canonically defines a filtration by subspaces
$$
 \U_\alpha(\D)=\bigcup_{\gamma\ge \alpha}\Cs_\alpha(\D),\qquad \alpha,\gamma\in\Q.
$$
This filtration is monotone decreasing, $\U_\alpha(\D)\subseteq\U_\beta(\D)$ if $\alpha\ge\beta$, satisfies the condition $\U_\alpha(\D)\cdot\U_\beta(\D)\subseteq\U_{\alpha+\beta}(\D)$. Conversely, the grading can be restored from the filtration as follows,
\begin{equation}\label{filtograd}
\Cs_\alpha(\D)=\U_\alpha(\D)/\U_\alpha^+(\D), \qquad\text{where}\qquad \U_\alpha^+(\D)=\bigcup_{\gamma>\alpha}\U_\alpha(\D).
\end{equation}

\begin{Def}
Let $\alpha\in\Q_+$ be a rational number and $\D$ an admissible polygon. We define $\W_\alpha(\D)$ as the subspace
\begin{equation}\label{filter}
  \W_\alpha(\D)=\{L\in\W: \PP_L\in\U_\gamma(\D)\}.
\end{equation}
In other words, $\W_\alpha(\D)$ denotes the $\C$-space of operators from $\W$ whose pseudosymbol contains only terms of weight $\alpha$ and higher.
\end{Def}

By definition, $\W_\alpha(\D)\supseteq\W_\beta(\D)$ if $\alpha\ge\beta$, so the spaces $\W_\alpha(\D)$ form a decreasing filtration of $\W(\D)$. This filtration agrees with the composition in $\W$ in the sense that
\begin{equation}\label{filter}
  \W_\alpha(\D)\cdot\W_\beta(\D)\subseteq\W_{\alpha+\beta}(\D)\qquad \forall \alpha,\beta\in\Q,
\end{equation}
cf.~with \eqref{gr-alg}.

Indeed, after reducing the composition of operators $L,M$ of weights $\alpha,\beta$ respectively to the canonical representation where all powers of $\eu$ occur to the left from all powers of $\eu$, we affect only terms of of order strictly greater than $\alpha+\beta$, as follows from \eqref{e-decrease} (recall that the weight of $\eu$ is negative $-w$).

Recall that for any choice of the weight we used \emph{all} rational numbers for labeling in the graded algebra $\^\Cs=\bigoplus_{\alpha\in\Q}\Cs_\alpha$: the homogeneous spaces $\Cs_\alpha$ could be nonzero only for countably many values forming an arithmetic progression (depending on $w$). In the same way the decreasing filtration of $\W$ by $\W_\alpha$ has ``jumps'' only at these values.

\begin{Prop}
For any rational $\alpha\in\Q$,
\begin{equation}\label{quotient}
  \W_\alpha(\D)/\W^+_\alpha(\D)=\Cs_\alpha(\D),\qquad\text{where}\quad
  \W_\alpha^+(\D)=\bigcup_{\gamma>\alpha}\W_{\gamma}(\D).
\end{equation}
\end{Prop}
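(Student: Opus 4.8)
The plan is to reduce the statement to the analogous (already trivial) identity \eqref{filtograd} for the commutative algebra $\^\Cs$ via the pseudosymbol map $L\mapsto\PP_L$. First I would make the definitions fully explicit: by \eqref{filter}, $\W_\alpha(\D)=\{L\in\W:\PP_L\in\U_\alpha(\D)\}$, so the pseudosymbol map restricts to a $\C$-linear surjection $\pi_\alpha\:\W_\alpha(\D)\twoheadrightarrow\U_\alpha(\D)$ (surjectivity is immediate: given $P\in\U_\alpha(\D)$, the operator $P(t,\eu)$ obtained by substituting $\eu$ for $\xi$ has $\PP_{P(t,\eu)}=P$, since this substitution does not involve any commutation and hence does not alter the support). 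Next, composing with the quotient map $\U_\alpha(\D)\to\U_\alpha(\D)/\U_\alpha^+(\D)=\Cs_\alpha(\D)$ of \eqref{filtograd}, I get a surjective $\C$-linear map $\Phi_\alpha\:\W_\alpha(\D)\twoheadrightarrow\Cs_\alpha(\D)$. The claim is precisely that $\ker\Phi_\alpha=\W_\alpha^+(\D)$, for then the induced isomorphism $\W_\alpha(\D)/\W_\alpha^+(\D)\xrightarrow{\ \sim\ }\Cs_\alpha(\D)$ is the assertion \eqref{quotient}.

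It remains to identify $\ker\Phi_\alpha$. An operator $L\in\W_\alpha(\D)$ lies in $\ker\Phi_\alpha$ iff $\PP_L\in\U_\alpha^+(\D)$, i.e. iff the pseudosymbol of $L$ contains only monomials $t^j\xi^i$ with $\wgt_w(i,j)>\alpha$ (and support inside $\D$). But $\PP_L$ and $L$ have, by the very definition of the canonical form \eqref{formal-oper}, exactly the same set of monomials; hence $L\in\ker\Phi_\alpha$ iff every monomial occurring in the canonical representation of $L$ has weight strictly greater than $\alpha$ and lies in $\D$ — and that is the definition of $\W_\alpha^+(\D)=\bigcup_{\gamma>\alpha}\W_\gamma(\D)$, once one observes that this union is a nested union indexed by the discrete progression of weights actually realised inside $\D$, so $\bigcup_{\gamma>\alpha}\W_\gamma(\D)=\W_{\alpha'}(\D)$ for the smallest realisable weight $\alpha'>\alpha$. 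Thus $\ker\Phi_\alpha=\W_\alpha^+(\D)$, completing the argument.

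The only genuinely substantive point is the one flagged above: that passing from an arbitrary expression in $t$ and $\eu$ to the canonical representation \eqref{formal-oper} does not destroy weight information below the advertised level. This is exactly \eqref{e-decrease}: each elementary commutation $\eu^i t^{j'}=t^{j'}(\eu+j')^i$ replaces a monomial by itself plus lower-order-in-$\eu$ monomials of the same $t$-degree, hence (as the weight of $\eu$ is $-w\le0$) of strictly larger weight. So for any $L\in\W$, the set of weight-$\alpha$ monomials in its canonical form coincides with that of $\PP_L$, and the membership conditions for $\W_\alpha(\D)$, $\W_\alpha^+(\D)$ genuinely depend only on $\PP_L$. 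I expect this to be the main (though still routine) obstacle; everything else is bookkeeping about the discreteness of the weight progression and the surjectivity of substitution, both already implicit in the discussion preceding \eqref{filter}.
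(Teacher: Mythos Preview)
Your proposal is correct and follows essentially the same route as the paper's one-line proof, which simply says the identity follows from \eqref{filtograd} and the definition of $\W_\alpha^+(\D)$; you have merely unpacked that sentence by making the pseudosymbol map explicit, checking its surjectivity and identifying its kernel. One small remark: your third paragraph about \eqref{e-decrease} is not actually needed here, since $\W_\alpha(\D)$ is \emph{defined} via the canonical form (through $\PP_L$), so no commutation ever has to be performed; that estimate is what justifies the multiplicativity \eqref{filter} of the filtration, not the quotient identity itself.
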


\begin{proof}
This follows from \eqref{filtograd} and the definition of the subspaces $\W^+\alpha(\D)$.
\end{proof}

\subsection{Factorization in the Weyl algebra}
Assume that $L\in\W$, $\ord L=n$ and $\D_L=\D'+\D''$. We look for conditions guaranteeing that $L$ can be decomposed as $L=MN$ with $M,N\in\W$ and $\D_M=\D'$, $\D_N=\D''$.

Choose a weight $w\in\Q$ and expand $L$ as the series $L=\sum_\gamma P_\gamma(t,\eu)$, where $P_\gamma$ are the corresponding quasihomogeneous components of the pseudopolynomial $P=\PP_L\in\^\Cs(\D)$.

We will look for the factorization in the form $L=MN$ defined by indeterminate pseudopolynomials $Q=\PP_M\in\Cs(\D')$, $R=\PP_N\in\Cs(\D'')$. by inductively constructing them and try to mimic the formal arguments from \secref{sec:formfact}. All notations will be kept as similar as possible in the commutative case. We assume that both $Q,R$ are expanded as sums of quasihomogeneous components $Q=\sum Q_\alpha$, $R=\sum R_\beta$.

The leading quasihomogeneous terms $Q_*,R_*$ of the minimal weights $\alpha_*,\beta_*$ respectively, must yield factorization of the leading term $P_*$. Fix them and consider the equation $\PP_L=\PP(MN)$. Since $\W$ is non-commutative, the right hand side is not equal to $\PP_M\PP_N$, but for any $\alpha,\beta$ from \eqref{e-decrease} it follows that
\begin{equation}\label{pssymb-weight}
  Q_\alpha(t,\eu)R_\beta(t,\eu)=(Q_\alpha R_\beta)(t,\eu)\bmod \W_\gamma,\qquad \gamma=\alpha+\beta,
\end{equation}
that is, after reducing the composition of operators to the canonical form, the result will have the same leading terms of order $\gamma=\alpha+\beta$ as if the algebra were commutative.

This means that the pseudopolynomials $Q_\alpha$, $R_\beta$ can be inductively defined from the infinite ``triangular'' system of equations of the form
\begin{equation}\label{triang-w}
  P_\gamma=\sum_{\alpha+\beta=\gamma}Q_\alpha R_\beta + S_\gamma, \qquad Q_\alpha\in\Cs_\alpha(\D'),\ R_\beta\in\Cs_\beta(\D''),
\end{equation}
cf.~with \eqref{factor-comm}, where $S_\gamma\in\Cs_\gamma(\D)$ is the collection of terms accumulated from re-expansion of terms $P_{\alpha'},Q_{\beta'}$ with $\alpha'+\beta'<\gamma$, which were already found by the induction hypothesis.

The equations \eqref{triang-w} are identical to the equations \eqref{factor-comm}, and their solvability depends only on the properties of $Q_*,R_*$ and the Newton diagrams $\D,\D''$ as described in \secref{sec:solvability}. In particular, Theorems~\ref{thm:homA} and~\ref{thm:homB} imply the following results.

\begin{Thm}\label{thm:operA}
If $\D_L=\D'+\D''$ and the admissible polygons $\D',\D''$ have no common slope, then any operator $L\in\W(\D)$ admits a formal decomposition $L=MN$ with $M\in\W(\D')$, $N\in\W(\D'')$.
\end{Thm}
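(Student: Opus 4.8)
The plan is to reduce the noncommutative factorization statement in $\W$ to the already-established solvability of the homological equation in the commutative algebra $\^\Cs$ — exactly the ``automatic translation'' advertised in the introduction. First I would fix a weight $w=\rho$ which is \emph{not} a common slope of $\D'$ and $\D''$; by the Corollary on $\PS(P+Q)$ such a $w$ need not lie in $\PS(\D')\cup\PS(\D'')$ at all, but it is cleaner to take $w$ to be a slope of one of the two polygons, say $w=\rho\in\PS(\D')\ssm\PS(\D'')$. With this choice $\alpha_*=\min\wgt|_{\D'}$ is attained along the sloped edge of $\D'$, so $Q_*$ is a genuine quasihomogeneous polynomial of that weight (with a nontrivial characteristic polynomial), while $\beta_*=\min\wgt|_{\D''}$ is attained at a corner, so $R_*$ is a monomial; in particular $P_*=Q_*R_*$ is a legitimate seed and the data $\mathscr H=(w,\D',\D'',Q_*,R_*)$ satisfy the hypotheses of Theorem~\ref{thm:homA}.

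Next I would write out the recursive scheme \eqref{triang-w}. Expanding $L=\sum_\gamma P_\gamma(t,\eu)$ and seeking $M,N$ with prescribed Newton polygons via their pseudosymbols $Q=\PP_M=\sum_\alpha Q_\alpha$, $R=\PP_N=\sum_\beta R_\beta$, the key point is \eqref{pssymb-weight}: reducing the product $Q_\alpha(t,\eu)R_\beta(t,\eu)$ to canonical form only perturbs terms of weight strictly greater than $\alpha+\beta$, because the commutation rule \eqref{comm} strictly decreases the $\eu$-degree (hence strictly increases the weight, $\eu$ having weight $-w<0$). Therefore at each fixed weight $\gamma$ the unknowns $Q_{\gamma-\beta_*}$ and $R_{\gamma-\alpha_*}$ enter the equation only through the homological operator $\H_\gamma$ of \eqref{hom-op}, all the remaining contributions — the original $P_\gamma$ together with the ``feedback'' term $S_\gamma$ built from the lower-weight $Q_{\alpha'},R_{\beta'}$ already determined, and the cross terms $\sum_{\alpha>\alpha_*,\beta>\beta_*}Q_\alpha R_\beta$ with $\alpha+\beta=\gamma$ — being known by the induction hypothesis. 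Thus \eqref{triang-w} is literally the commutative homological equation \eqref{homolog} with a modified but \emph{known} right-hand side, and its solvability at weight $\gamma$ is exactly surjectivity of $\H_\gamma$.

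Then I would invoke Theorem~\ref{thm:homA}: since $\D',\D''$ have no common slope, every $\H_\gamma$ with $\gamma\ge\gamma_*=\alpha_*+\beta_*$ is surjective, so at each step one can choose $(Q_{\gamma-\beta_*},R_{\gamma-\alpha_*})$ solving the equation (the choice need not be unique — this reflects the non-uniqueness already present in the commutative case, Remark~\ref{rem:convergence}). Running the induction over the well-ordered index set $\Z_+-w\Z_+$ produces formal series $Q\in\Cs(\D')$, $R\in\Cs(\D'')$, hence operators $M=Q(t,\eu)\in\W(\D')$, $N=R(t,\eu)\in\W(\D'')$ with $L=MN$; that $\D_M=\D'$ and $\D_N=\D''$ follows because the leading terms $Q_*,R_*$ have full support on the respective edges and, by \eqref{minksum-W}, $\D_{MN}=\D_M+\D_N$ forces $\D_M=\D'$, $\D_N=\D''$.

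The step I expect to be the main obstacle — or at least the one demanding the most care — is the bookkeeping behind \eqref{pssymb-weight}, i.e.\ verifying that passing from the naive commutative product to the canonical form in $\W$ introduces \emph{only} higher-weight corrections and never touches the weight-$\gamma$ component that carries the unknowns. This rests on \eqref{e-decrease} and on the fact that the filtration $\W_\alpha(\D)$ is multiplicative, $\W_\alpha\cdot\W_\beta\subseteq\W_{\alpha+\beta}$, so that the accumulated term $S_\gamma$ genuinely lies in $\Cs_\gamma(\D)$ and does not leak outside the polygon; once this is in place, the argument is a mechanical transcription of \secref{sec:formfact}, and Theorem~\ref{thm:homA} does the rest. (The analytic refinement — that a convergent solution exists when $L$ has convergent coefficients — would follow the same way from the analytic version of Theorem~\ref{thm:ppA} noted in Remark~\ref{rem:convergence}, but I would state it separately.)
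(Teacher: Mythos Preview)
Your main argument is correct and is exactly the paper's approach: the paper's own proof of Theorems~\ref{thm:operA}--\ref{thm:operB} is a two-sentence statement that the equations \eqref{triang-w} differ from \eqref{homolog} only by the already-known extra term $S_\gamma$, and that solvability therefore follows from surjectivity of the homological operator $\H_\gamma$ established in Theorem~\ref{thm:homA}. Your write-up is a faithful expansion of this, with the right emphasis on \eqref{pssymb-weight} and the multiplicativity of the filtration as the mechanism that confines all noncommutative corrections to strictly higher weight.

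One point to flag: your closing parenthetical about the analytic refinement is wrong, and the paper explicitly warns against it. In \secref{sec:solvability} the convergence of the commutative factorization (Remark~\ref{rem:convergence}) comes from the analytic version of Theorems~\ref{thm:ppA}--\ref{thm:ppB}, but this does \emph{not} transfer to $\W$. The paper stresses (see the subsection ``Remark on the convergence'' following Theorem~\ref{thm:main}) that the operator results are only formal: passing from grading to filtration means the right-hand side of \eqref{triang-w} accumulates ever more feedback terms $S_\gamma$ compared with \eqref{factor-comm}, and this growth is precisely what spoils the convergence estimates. The divergence here is not a technical gap but the genuine Stokes phenomenon for irregular singularities, so you should drop that remark rather than state it separately.
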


\begin{Thm}\label{thm:operB}
If $\D$ is a single-slope admissible polygon, $L\in\W(\D)$ has a characteristic polynomial $\sigma=\sigma_L\in\C[\l]$, then for any factorization $\sigma=\sigma'\sigma''$ with mutually prime polynomials $\sigma',\sigma''$ one can find a formal factorization $L=MN$ by two single-slope operators such that $\sigma_M=\sigma'$, $\sigma_N=\sigma''$.
\end{Thm}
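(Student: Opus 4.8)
\textbf{Proof plan for Theorems~\ref{thm:operA} and~\ref{thm:operB}.}
The plan is to piggy-back on the commutative results via the filtration machinery already set up. First I would fix the weight $w$ according to the slope data: in the situation of Theorem~\ref{thm:operA} I take any $w$ that separates the two polygons appropriately (so that we fall into the hypotheses of Theorem~\ref{thm:homA}), and in the single-slope case of Theorem~\ref{thm:operB} I take $w=\rho$, the unique slope, so that $\alpha_*=\beta_*=0$ and the seed polynomials $Q_*,R_*$ acquire characteristic polynomials $\sigma'=\sigma_{Q_*}$, $\sigma''=\sigma_{R_*}$ with $\gcd(\sigma',\sigma'')=1$. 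In both cases the seed factorization $P_*=Q_*R_*$ of the leading quasihomogeneous term exists: it is a purely commutative statement about quasihomogeneous pseudopolynomials, using the factorization of the characteristic polynomial $\sigma_L=\sigma'\sigma''$ (respectively, the Minkowski decomposition $\D=\D'+\D''$ which forces the shape of $P_*$). This is the base of the induction.

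Next I would run the inductive construction of $Q=\sum_\alpha Q_\alpha$ and $R=\sum_\beta R_\beta$ exactly along the equations \eqref{triang-w}. The point I would emphasize is the content of \eqref{pssymb-weight}: when we compose the operators $Q_\alpha(t,\eu)$ and $R_\beta(t,\eu)$ in the non-commutative algebra $\W$ and reduce to canonical form, the commutation rule \eqref{comm}, via \eqref{e-decrease}, only produces correction terms of weight strictly greater than $\gamma=\alpha+\beta$ (because reducing $\eu^i t^{j'}$ lowers the $\eu$-degree, and $\eu$ carries the negative weight $-w$). Hence at each weight level $\gamma$ the equation we must solve for the genuinely new unknowns $Q_{\gamma-\beta_*}$ and $R_{\gamma-\alpha_*}$ is \emph{literally} the homological equation $\H_\gamma(U,V)=W_\gamma$ of \secref{sec:formfact}, with a right-hand side $W_\gamma$ that is completely determined by $P_\gamma$ and the previously constructed terms (the accumulated correction $S_\gamma$). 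Since by Theorem~\ref{thm:homA} (resp.\ Theorem~\ref{thm:homB}) every $\H_\gamma$ is surjective, each such equation is solvable, and the induction proceeds without obstruction; making a choice of solution at every step yields the desired $M=Q(t,\eu)$, $N=N(t,\eu)$ with $\D_M=\D'$, $\D_N=\D''$ (the latter because $Q_*\in\Cs_{\alpha_*}(\D')$, $R_*\in\Cs_{\beta_*}(\D'')$ and the higher terms live in the same polygons by construction). For Theorem~\ref{thm:operB} the identity $\sigma_M=\sigma'$, $\sigma_N=\sigma''$ is then immediate since $\sigma_M$ is read off from the weight-zero leading term $Q_*$, which we chose with characteristic polynomial $\sigma'$.

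The only genuinely non-commutative input is \eqref{pssymb-weight}, and I expect that to be the main technical point to get right: one must check carefully that the bookkeeping of the correction terms $S_\gamma$ respects the filtration $\W_\gamma(\D)$, i.e.\ that re-expanding a product of already-determined pieces never feeds a term of weight $\le\gamma$ back into the equation at level $\gamma$, which would destroy the triangularity. This is exactly what \eqref{filter} and \eqref{e-decrease} guarantee, so the verification is routine once those are in place, but it is the step where the argument genuinely differs from the commutative prototype of \secref{sec:formfact}. Everything else is a transcription of the commutative proof, and the solvability of the homological equation — the one place where the hypotheses (no common slope / coprime characteristic polynomials) are used — has already been reduced in \secref{sec:solvability} to the classical factorization Theorems~\ref{thm:ppA} and~\ref{thm:ppB}. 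I would close by remarking, parallel to Remark~\ref{rem:convergence}, that if $L$ has convergent coefficients one can select a convergent solution at each stage, giving an analytic factorization.
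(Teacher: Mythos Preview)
Your proof is correct and follows essentially the same route as the paper: reduce the non-commutative factorization to the infinite triangular system \eqref{triang-w}, observe via \eqref{pssymb-weight} and \eqref{e-decrease} that at each weight level the equation is precisely the homological equation \eqref{homolog} with an extra already-determined term $S_\gamma$, and invoke the surjectivity of $\H_\gamma$ from Theorems~\ref{thm:homA}--\ref{thm:homB}. The paper's own proof is a two-line compression of exactly this argument.

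One genuine error, however: your closing remark about convergence is wrong and contradicts the paper. You write that ``if $L$ has convergent coefficients one can select a convergent solution at each stage, giving an analytic factorization.'' The paper stresses the opposite: unlike the commutative situation of Remark~\ref{rem:convergence}, the factorization of differential operators is \emph{only formal} in general. The passage from grading to filtration means that the right-hand side of \eqref{triang-w} accumulates more and more correction terms $S_\gamma$ compared with \eqref{factor-comm}, and this is precisely where divergence (the Stokes phenomenon) enters. Drop that final sentence.
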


\begin{proof}[Proof of both Theorems]
Each equation in the infinite series \eqref{triang-w} is of the form \eqref{homolog} with the only difference being an extra term $S_\gamma$ coming from the preceding equation. Its solvability follows from the surjectivity of the corresponding homological operator associated with the data $\mathscr H=(w,\D',\D'',Q_*,R_*)$.
\end{proof}

As an immediate corollary to these two theorems, we have the following result on reducibility.

\begin{Def}
A differential operator $L\in\W$ is called \emph{monic}, if it has a single slope, and the corresponding characteristic polynomial $\sigma_L$ has a single root.
\end{Def}

\begin{Thm}\label{thm:main}
Any differential operator $L\in\W$ admits a decomposition into the non-commutative product of monic operators.
\end{Thm}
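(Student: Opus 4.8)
The plan is to deduce Theorem~\ref{thm:main} from Theorems~\ref{thm:operA} and~\ref{thm:operB} by a purely combinatorial ``peeling-off'' argument, running an induction on the order $n=\ord L$. The base case is $n\le 1$: an operator of order $0$ is a unit of $\C[[t]]$, hence the empty product of monic factors, while a first-order operator has a width-$1$ Newton polygon and is therefore automatically single-slope (on $[0,1]$ the gap function can only fail to be affine at interior integer points, of which there are none), with characteristic polynomial of degree $1$ and thus a single root -- so $L$ is already monic.

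For the inductive step let $n\ge 2$ and suppose the statement holds for all operators of smaller order. If $L$ has more than one slope, then by the lemma on decomposition of admissible polygons into single-slope summands write $\D_L=\D'+\D''$, where $\D'$ is the single-slope summand carrying the smallest element $\rho$ of $\PS(\D_L)$ and $\D''$ is the Minkowski sum of the remaining single-slope summands, so that $\rho\notin\PS(\D'')$. Since $\D'$ and $\D''$ have no common slope, Theorem~\ref{thm:operA} yields $L=MN$ with $\D_M=\D'$ and $\D_N=\D''$. Every genuine single-slope summand has positive width, so $1\le\ord M<n$ and $1\le\ord N<n$, and the inductive hypothesis applied to $M$ and $N$ finishes this case.

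If instead $L$ is single-slope, consider its characteristic polynomial $\sigma_L\in\C[\l]$; it has positive degree and all its roots are nonzero. If $\sigma_L$ has a single root, $L$ is monic and there is nothing to prove. Otherwise pick a root $\l_1$ and factor $\sigma_L=\sigma'\sigma''$, with $\sigma'$ the full power of $(\l-\l_1)$ dividing $\sigma_L$ and $\sigma''$ the complementary factor, so that $\gcd(\sigma',\sigma'')=1$. Theorem~\ref{thm:operB} then gives $L=MN$ with $M,N$ single-slope of the same slope as $L$ and $\sigma_M=\sigma'$, $\sigma_N=\sigma''$. The order of a single-slope operator equals $q\deg\sigma$, where $q$ is the denominator of the slope in lowest terms; since $\deg\sigma'+\deg\sigma''=\deg\sigma_L$ with both factors of positive degree, we get $1\le\ord M<n$ and $1\le\ord N<n$, and the inductive hypothesis applies once more. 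This exhausts all cases.

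The substantive content lies entirely in Theorems~\ref{thm:operA} and~\ref{thm:operB} (and, through them, in the commutative Theorems~\ref{thm:ppA}--\ref{thm:ppB} and the solvability of the homological equation); for Theorem~\ref{thm:main} itself the only thing that needs care is bookkeeping -- one must check that the factors produced by those theorems are again exactly of the shape the induction feeds on (one single-slope factor plus a factor with strictly fewer slopes in the first case; two single-slope factors whose characteristic polynomials have strictly smaller degree in the second), and that both reduction moves strictly decrease the order while keeping it $\ge 1$, which guarantees termination. I expect no serious obstacle beyond this precisely because the difficulty has been front-loaded into the homological analysis.
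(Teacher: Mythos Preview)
Your proposal is correct and takes the same approach as the paper, which presents Theorem~\ref{thm:main} simply as ``an immediate corollary'' of Theorems~\ref{thm:operA} and~\ref{thm:operB} without writing out the induction; you have supplied exactly the bookkeeping the paper leaves implicit.
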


\subsection{Remark on the convergence}
It is absolutely imperative to stress that all results on factorization of the differential operators, unlike their counterparts on pseudopolynomials, are only formal (cf.~with Remark~\ref{rem:convergence}). Technically, the difference between the two theories can be attributed to the fact that the passage from grading to filtration results in the growth of the number of terms in the right hand side of the homological equation \eqref{triang-w} compared with \eqref{factor-comm}.

However, the issue of the divergence of formal transformations, diagonalizing (say, in the non-resonant case) irregular singularities was studied in detail, and geometric obstructions were identified as a Stokes matrices \cite{thebook}*{\parasymbol 20G}.

The ambitious goal beyond this paper and its precursor \cite{shira} is to identify analytic obstructions to the formal Weyl classification and formal factorization in a similar form as a suitable cocycle over a punctured neighborhood $(\C,0)$. However, this project is still in its rudimentary stage.

\begin{bibdiv}
\begin{biblist}

\bib{arnold}{book}{
   author={Arnold, V. I.},
   title={Huygens and Barrow, Newton and Hooke},
   note={Pioneers in mathematical analysis and catastrophe theory from
   evolvents to quasicrystals;
   Translated from the Russian by Eric J. F. Primrose},
   publisher={Birkh\"auser Verlag, Basel},
   date={1990},
   pages={118},
   isbn={3-7643-2383-3},
   review={\MR{1078625}},
   doi={10.1007/978-3-0348-9129-5},
}

\bib{brieskorn}{book}{
   author={Brieskorn, Egbert},
   author={Kn\"orrer, Horst},
   title={Plane algebraic curves},
   series={Modern Birkh\"auser Classics},
   note={Translated from the German original by John Stillwell;
   [2012] reprint of the 1986 edition},
   publisher={Birkh\"auser/Springer Basel AG, Basel},
   date={1986},
   pages={x+721},
   isbn={978-3-0348-0492-9},
   review={\MR{2975988}},
   doi={10.1007/978-3-0348-5097-1},
}

\bib{sqh}{article}{
   author={Greuel, Gert-Martin},
   author={Pfister, Gerhard},
   title={On moduli spaces of semiquasihomogeneous singularities},
   conference={
      title={Algebraic geometry and singularities},
      address={La R\'abida},
      date={1991},
   },
   book={
      series={Progr. Math.},
      volume={134},
      publisher={Birkh\"auser, Basel},
   },
   date={1996},
   pages={171--185},
   review={\MR{1395180}},
}

\bib{thebook}{book}{
   author={Ilyashenko, Yulij},
   author={Yakovenko, Sergei},
   title={Lectures on analytic differential equations},
   series={Graduate Studies in Mathematics},
   volume={86},
   publisher={American Mathematical Society, Providence, RI},
   date={2008},
   pages={xiv+625},
   isbn={978-0-8218-3667-5},
   review={\MR{2363178 (2009b:34001)}},
}

\bib{kamgarpour}{article}{
    author={Kamgarpour, Masoud},
    author={Wheatherhog, Samuel},
    title={A New Approach to Jordan Decomposition for Formal Differential Operators},
    journal={\texttt{ArXiv}},
    volume={1702.03608v1},
    year={2017},
    month={2},
    pages={1--12},
    note={Preprint published on February 13, 2017.},
}

\bib{malgrange}{article}{
   author={Malgrange, Bernard},
   title={Sur la r\'eduction formelle des \'equations diff\'erentielles \'a singularit\'es irr\'eguli\`eres},
   journal={Pr\'epublication de l'Inst. Fourier, Grenoble},
   date={1979},
   reprint={
    title={Singularit\'es irr\'eguli\`eres},
    series={Documents Math\'ematiques},
        volume={5},
        author={Deligne, Pierre},
        author={Malgrange, Bernard},
        author={Ramis, Jean-Pierre},
    publisher={Soci\'et\'e Math\'ematique de France},
    date={2007},
   isbn={978-2-85629-241-9},
   review={\MR{2387754}},
   pages={97--107},
}

}

\bib{leanne}{thesis}{
  author={Mezuman, Leanne},
  school={Weizmann Institute of Science},
  year={2017},
  title={Classification of non-Fuchsian linear differential equations},
  type={M.Sc.~thesis}
}

\bib{mero-flat}{article}{
   author={Novikov, Dmitry},
   author={Yakovenko, Sergei},
   title={Lectures on meromorphic flat connections},
   conference={
      title={Normal forms, bifurcations and finiteness problems in
      differential equations},
   },
   book={
      series={NATO Sci. Ser. II Math. Phys. Chem.},
      volume={137},
      publisher={Kluwer Acad. Publ., Dordrecht},
   },
   date={2004},
   pages={387--430},
   review={\MR{2085816 (2005f:34255)}},
}

\bib{ore}{article}{
   author={Ore, \Ore ystein},
   title={Theory of noncommutative polynomials},
   journal={Ann. of Math. (2)},
   volume={34},
   date={1933},
   number={3},
   pages={480--508},
   issn={0003-486X},
   review={\MR{1503119}},
   doi={10.2307/1968173},
}

\bib{vdp-sing}{book}{
   author={van der Put, Marius},
   author={Singer, Michael F.},
   title={Galois theory of linear differential equations},
   series={Grundlehren der Mathematischen Wissenschaften [Fundamental
   Principles of Mathematical Sciences]},
   volume={328},
   publisher={Springer-Verlag, Berlin},
   date={2003},
   pages={xviii+438},
   isbn={3-540-44228-6},
   review={\MR{1960772}},
   doi={10.1007/978-3-642-55750-7},
}

\bib{robba}{article}{
   author={Robba, P.},
   title={Lemmes de Hensel pour les op\'erateurs diff\'erentiels. Application \`a
   la r\'eduction formelle des \'equations diff\'erentielles},
   language={French},
   journal={Enseign. Math. (2)},
   volume={26},
   date={1980},
   number={3-4},
   pages={279--311 (1981)},
   issn={0013-8584},
   review={\MR{610528}},
}

\bib{shira}{article}{
   author={Tanny, Shira},
   author={Yakovenko, Sergei},
   title={On local Weyl equivalence of higher order Fuchsian equations},
   journal={Arnold Math. J.},
   volume={1},
   date={2015},
   number={2},
   pages={141--170},
   issn={2199-6792},
   review={\MR{3370063}},
   doi={10.1007/s40598-015-0014-6},
}

\bib{vain-tren}{book}{
   author={Vainberg, M. M.},
   author={Trenogin, V. A.},
   title={Theory of branching of solutions of non-linear equations},
   note={Translated from the Russian by Israel Program for Scientific
   Translations},
   publisher={Noordhoff International Publishing, Leyden},
   date={1974},
   pages={xxvi+485},
   review={\MR{0344960}},
}

\bib{wall}{book}{
   author={Wall, C. T. C.},
   title={Singular points of plane curves},
   series={London Mathematical Society Student Texts},
   volume={63},
   publisher={Cambridge University Press, Cambridge},
   date={2004},
   pages={xii+370},
   isbn={0-521-83904-1},
   isbn={0-521-54774-1},
   review={\MR{2107253}},
   doi={10.1017/CBO9780511617560},
}

\end{biblist}
\end{bibdiv}
\end{document}